\documentclass[11pt,english]{amsart}
\usepackage{amsmath, amsthm, latexsym, amssymb,url,dsfont}
\usepackage{amsfonts, bbold}
\usepackage[all]{xy}
\usepackage{epsfig}
\usepackage{enumerate}
\usepackage{xcolor}
\usepackage{graphicx}
\usepackage{yhmath}
\usepackage{mathdots}
\usepackage{mathtools}
\usepackage{pifont}

\usepackage{tikz}
\usetikzlibrary{matrix,arrows,decorations.pathmorphing}
\usepackage{tikz-cd}

\newcommand{\shrinkmargins}[1]{
  \addtolength{\textheight}{#1\topmargin}
  \addtolength{\textheight}{#1\topmargin}
  \addtolength{\textwidth}{#1\oddsidemargin}
  \addtolength{\textwidth}{#1\evensidemargin}
  \addtolength{\topmargin}{-#1\topmargin}
  \addtolength{\oddsidemargin}{-#1\oddsidemargin}
 \addtolength{\evensidemargin}{-#1\evensidemargin}
  }

\shrinkmargins{.6}

\theoremstyle{plain}

\newtheorem{theorem}{Theorem}[section]
\newtheorem{corollary}[theorem]{Corollary}
\newtheorem{lemma}[theorem]{Lemma}

\newtheorem*{teo}{Theorem}

\newtheorem{definition}[theorem]{Definition}

\theoremstyle{remark}
\newtheorem{remark}[theorem]{Remark}

\theoremstyle{definition}

\def \Z { \mathbb{Z}}

\def \Gal { \text{Gal}}
\def \ker { \text{Ker}}

\def \Tr { {\rm Tr}}

\newcommand{\oo}{{\mathfrak{o}}}

\newcommand{\QQ}{{\mathbb{Q}}}
\newcommand{\ZZ}{{\mathbb{Z}}}

\DeclareMathOperator{\lcm}{lcm}

\begin{document}

\thispagestyle{empty}
\setcounter{tocdepth}{7}

\title{The trace form over cyclic number fields}
\author{Wilmar Bola\~nos \and  Guillermo Mantilla-Soler}
\date{}

\maketitle

\begin{abstract}
In the mid 80's Conner and Perlis showed that for cyclic number fields of prime degree $p$ the isometry class of integral trace is completely determined by the discriminant. Here we generalize their result to tame cyclic number fields of arbitrary degree. Furthermore, for such fields, we give an explicit description of a Gram matrix of the integral trace in terms of the discriminant of the field.
\end{abstract}

\section{Introduction}

An interesting arithmetic invariant of a number field $K$ is its integral trace form, i.e., the integral quadratic form obtained by restricting the bilinear trace pairing \[(x,y) \mapsto {\rm Tr}_{K}(x\cdot y)\] to the maximal order $\oo_{K}$. One of several reasons why the integral trace is of importance in number theory (see \cite{By1, B3, B4, EV1, M5}) is that it is a refinement of the discriminant $\mathfrak{d}_{K}$.  Moreover, by a result of Tausky (see \cite{Ta}) the integral trace is also a refinement of the signature. It follows that two necessary conditions for two number fields $K$ and $L$ to have isometric integral traces is that they have equal degrees and equal discriminants. A result of Conner and Perlis form the early 80's  states that if the fields in question are Galois and of prime degree then such  conditions are also sufficient:

\begin{theorem}\cite[\S IV]{Cp}
Let $p$ be an odd prime and let $K, L$ be two $\Z/p\Z$-number\footnote{Recall that for a finite group $G$ a number field $K$ is called a {\it $G$-number field} if the Galois closure of $K/\QQ$ has Galois group isomorphic to $G$.}fields.  Then \[\left< \oo_K , {\rm Tr}_{K /\QQ} \right> \cong \left< \oo_L , {\rm Tr}_{L /\QQ} \right> \ \mbox{if and only if} \ \mathfrak{d}_{K}=\mathfrak{d}_{L}. \]
\end{theorem}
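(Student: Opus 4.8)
The nontrivial direction is sufficiency; necessity is immediate, since the determinant of any Gram matrix of the integral trace $\langle \oo_K, \Tr_{K/\QQ}\rangle$ equals the discriminant $\mathfrak{d}_K$, so isometric trace lattices force $\mathfrak{d}_K=\mathfrak{d}_L$. Assume therefore that $\mathfrak{d}_K=\mathfrak{d}_L=:d$, and let me explain how I would produce an isometry. First I would record two rigidity features special to this situation. Since $\Z/p\Z$ has no element of order two, complex conjugation acts trivially, so $K$ and $L$ are totally real; hence both trace forms are positive definite of rank $p$ and signature $(p,0)$. Moreover, in a $\Z/p\Z$-extension every ramified prime is \emph{totally} ramified with inertia equal to the whole group, so the local discriminant exponent at a ramified prime is completely pinned down: the different exponent is $p-1$ at a tame prime $\ell\neq p$, and at $p$ it is determined by the conductor. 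Consequently the factorization $d=\prod_\ell \ell^{a_\ell}$ records exactly the set of ramified primes together with their common ramification type, identically for $K$ and $L$.

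The plan is then to work in the local--global (genus) framework for positive definite $\ZZ$-lattices. I would first establish the local isometries $\oo_K\otimes\ZZ_\ell\cong\oo_L\otimes\ZZ_\ell$ for every $\ell$. At a prime unramified in $K$ the different is a local unit, so the trace pairing is unimodular over $\ZZ_\ell$ and the lattice is the standard unimodular $\ZZ_\ell$-lattice of rank $p$; this depends only on $p$. At a tamely ramified $\ell\neq p$ the completion is $\ZZ_\ell[\pi]$ with $\pi$ a uniformizer satisfying an Eisenstein relation, and computing $\Tr(\pi^i\pi^j)$ in the basis $1,\pi,\dots,\pi^{p-1}$ expresses the local trace form purely in terms of $p$ and $v_\ell(d)=p-1$. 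The wildly ramified prime $p$ (when it ramifies) is the only delicate local computation, but its trace form is again fixed by the conductor exponent, hence by $v_p(d)$. Since $\mathfrak{d}_K=\mathfrak{d}_L$ forces identical ramification data at every $\ell$, these local models agree for $K$ and $L$; combined with the common signature $(p,0)$, the two integral trace forms lie in the same genus.

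The heart of the argument, and the step I expect to be the main obstacle, is upgrading ``same genus'' to ``same class,'' since positive definite lattices of large determinant generally fill many classes in a genus and no size bound or mass formula will suffice here. Instead I would rigidify using the Galois action. The trace pairing is $G$-invariant for $G=\Z/p\Z=\langle\sigma\rangle$, so each trace lattice is a $\ZZ[G]$-lattice on which $G$ acts by isometries, and I would classify such lattices equivariantly rather than as bare quadratic forms. Rationally, $\QQ[G]\cong\QQ\times\QQ(\zeta_p)$ splits the trace form into the line $\QQ\cdot 1$ carrying $\langle p\rangle$ and a rank-$(p-1)$ piece that is free of rank one over $\QQ(\zeta_p)$. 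Over the primes away from $p$ the extension is tame, so Noether's theorem makes $\oo_K$ locally free over $\ZZ[G]$ there, and the integral structure of the $\QQ(\zeta_p)$-part is a projective rank-one $\ZZ[\zeta_p]$-module equipped with the induced (Hermitian-type) trace form; the prime $p$ is reattached through its separate local model from the previous step.

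What remains, and what carries the real weight, is to show that the isomorphism class of this projective module-with-form, together with the contribution at $p$, is a \emph{function of $d$ alone}. Concretely I would exhibit an explicit $\ZZ$-basis of $\oo_K$ adapted to the Galois decomposition and to the totally ramified primes, and verify that its Gram matrix under $\Tr_{K/\QQ}$ depends only on the factorization of $d$. Because the ramification is totally ramified of prime degree, the relevant Steinitz/ideal-class and discriminant invariants of the $\ZZ[\zeta_p]$-module are forced, which is exactly where the prime-degree and tameness hypotheses do the work. Producing the same explicit Gram matrix for both $K$ and $L$ then yields the desired isometry directly, simultaneously furnishing the explicit model promised in the abstract and bypassing any delicate appeal to one-class-per-genus.
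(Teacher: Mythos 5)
Your necessity argument and your structural observations (total realness, total ramification at each ramified prime, the fact that $\mathfrak{d}_K$ pins down the ramification data) are all correct, and your final destination --- an explicit basis of $\oo_K$ whose Gram matrix under the trace is a function of $\mathfrak{d}_K$ alone --- is exactly the strategy this paper executes for its generalization. The problem is that every step that carries real weight in your proposal is announced rather than performed. The genus computation does no work, as you concede yourself when you abandon it. The pivot to $G$-equivariant lattices is the right idea (it is, in the language of \S 2, the circulant/Hermitian-form formalism), but the assertion that ``the relevant Steinitz/ideal-class and discriminant invariants of the $\ZZ[\zeta_p]$-module are forced'' is essentially the theorem restated, not an argument: a genus of Hermitian lattices over $\ZZ[\zeta_p]$ may contain several classes, and one must actually exhibit the isometry or the common Gram matrix. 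In the tame case ($p\nmid\mathfrak{d}_K$) the paper does this concretely: by Hilbert--Speiser $\oo_K$ is free over $\ZZ[G]$, the Gauss period $\Tr_{\QQ(\eta_\ell)/K}(\eta_\ell)$ and its conjugates give a normal integral basis, and Lemmas \ref{invariant}--\ref{2.4} compute the associated circulant as the image of $\ell\, t^{(\ell-1)/2}-\Sigma$, yielding a Gram matrix with diagonal $\ell-h$ and off-diagonal $-h$, $h=(\ell-1)/p$, which visibly depends only on $\ell$ and $p$. Your proposal never reaches an analogous computation.

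The most serious gap concerns the prime $p$ itself. The theorem as stated allows $p\mid\mathfrak{d}_K$, in which case $p$ is \emph{wildly} ramified (its ramification index is $p$), $\oo_K$ is not locally free over $\ZZ[G]$ at $p$, and no normal integral basis exists. So the ``explicit $\ZZ$-basis adapted to the Galois decomposition'' cannot be produced by the mechanism you describe, and saying that the prime $p$ ``is reattached through its separate local model'' gives no procedure for gluing that local model into a global basis whose Gram matrix is determined by $v_p(\mathfrak{d}_K)$. This is precisely the hard part of Conner and Perlis's argument in \S IV of \cite{Cp}, and it is the reason the present paper's generalization to degree $n$ is restricted to tame fields. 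As written, your proposal is a plausible roadmap for the tame sub-case and an unproven claim for the wild one.
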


The objective of this paper is to generalize the above result to cyclic extensions of arbitrary degree. At the moment we can do so under  the additional hypothesis that the fields are {\it tame number fields} i.e., that there is no rational prime that ramifies wildly in either field. Our main result is the following:

\begin{teo}[cf. Theorem \ref{odd} and Theorem \ref{even}]
Let $n$ be a positive integer and let $K, L$ be two tame $\Z/n\Z$-number fields. Then \[\left< \oo_K , {\rm Tr}_{K /\QQ} \right> \cong \left< \oo_L , {\rm Tr}_{L /\QQ} \right> \ \mbox{if and only if} \ \mathfrak{d}_{K}=\mathfrak{d}_{L}. \]
\end{teo}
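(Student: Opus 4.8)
The plan is to prove both implications through the explicit Gram matrix promised in the abstract. The forward direction is immediate: the integral trace form has determinant $\mathfrak{d}_K$ and, by Taussky's theorem quoted above, recovers the signature; since for a cyclic field $K$ is either totally real or totally imaginary, the archimedean type (and hence the signature) is an invariant we must match, and I will show below that it too is forced by $\mathfrak{d}_K$. For the converse the central tool is the \emph{Hilbert--Speiser theorem}: a tame abelian extension of $\QQ$ has a normal integral basis, so $\oo_K = \ZZ[G]\cdot\alpha$ with $G=\Gal(K/\QQ)\cong\ZZ/n\ZZ=\langle\sigma\rangle$. In the basis $\{\sigma^k\alpha\}_{k=0}^{n-1}$ the Gram matrix of the trace form is the $G$-circulant with symbol $c_k=\Tr_{K/\QQ}(\alpha\,\sigma^k\alpha)$, because $\Tr_{K/\QQ}$ is $G$-invariant.

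Next I diagonalize this circulant over $\CC$ using the characters $\chi_j$ of $G$ with $\chi_j(\sigma)=\zeta^j$ for $\zeta$ a primitive $n$-th root of unity. A direct computation shows the eigenvalues are $\hat c_j=(\chi_j,\alpha)\,(\overline{\chi_j},\alpha)$, where $(\chi,\alpha)=\sum_{g\in G}\chi(g)\,g(\alpha)$ is the Galois resolvent. The crucial normalization is to take $\alpha$ to be a Gaussian period, i.e.\ the trace of $\zeta_f$ from a cyclotomic field $\QQ(\zeta_f)\supseteq K$; then each resolvent is an honest Gauss sum attached to the primitive Dirichlet character underlying $\chi_j$. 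When $K$ is totally real the embeddings of $\alpha$ are real, so $(\overline{\chi_j},\alpha)=\overline{(\chi_j,\alpha)}$ and $\hat c_j=|(\chi_j,\alpha)|^2=\mathfrak{f}(\chi_j)$, the conductor. Inverting the discrete Fourier transform gives $c_k=\tfrac1n\sum_j \mathfrak{f}(\chi_j)\,\zeta^{-jk}$, a Gram matrix depending only on $n$ and the multiset of conductors $\{\mathfrak{f}(\chi_j)\}_j$.

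It remains to see that $\mathfrak{d}_K=\mathfrak{d}_L$ forces this conductor data to agree. In the tame case every conductor exponent at a prime is $0$ or $1$, so $\mathfrak{f}(\chi)=\prod_{p:\ \chi|_{I_p}\neq 1}p$ and the exponent of $p$ in $\mathfrak{d}_K$ is $a_p=n(e_p-1)/e_p$, where $e_p=|I_p|$ is the tame ramification index. Hence the $p$-adic valuation of $\mathfrak{d}_K$ determines $e_p=n/(n-a_p)$, so equal discriminants give the same ramified primes with the same $e_p$. Since $\ZZ/n\ZZ$ has a \emph{unique} subgroup of each order, the inertia group $I_p=\langle\sigma^{n/e_p}\rangle$ is literally the same in both fields, and $\chi_j$ ramifies at $p$ exactly when $e_p\nmid j$; therefore $\mathfrak{f}_K(\chi_j)=\prod_{p:\ e_p\nmid j}p=\mathfrak{f}_L(\chi_j)$ for every $j$. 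The eigenvalues, hence the integer symbols $c_k$, coincide, the two circulant Gram matrices are equal, and $\sigma^k\alpha_K\mapsto\sigma^k\alpha_L$ is the desired isometry. This disposes of the totally real case, and in particular of odd $n$ (Theorem \ref{odd}), where every cyclic field is totally real.

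The main obstacle is the even case (Theorem \ref{even}), where $K$ may be totally imaginary and the trace form is indefinite of signature $(n/2,n/2)$. Then $\alpha$ has complex embeddings, $(\overline{\chi_j},\alpha)$ is no longer the conjugate of $(\chi_j,\alpha)$, and the eigenvalues become $\hat c_j=\pm\,\mathfrak{f}(\chi_j)$ with signs recording the archimedean place through complex conjugation $c=\sigma^{n/2}$. Controlling these signs is exactly where the analytic input is needed: the order-two character $\chi_{n/2}$ is a real (quadratic) character, and its Gauss sum sign is evaluated by Gauss's classical theorem. Here tameness is essential, since $2\nmid e_2$ forces the quadratic subfield to be unramified at $2$, its conductor odd and congruent to $1\bmod 4$, so the delicate conductor-$4$ and conductor-$8$ phenomena never intervene. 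The crux is to show that this signed eigenvalue pattern, and with it the signature, is still determined by $\mathfrak{d}_K$ alone; once that is established the same inverse-Fourier argument yields equal symbols $c_k$ and the explicit Gram matrix, completing the proof.
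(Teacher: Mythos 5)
Your argument for the totally real case (hence for all odd $n$) is essentially sound and is a genuinely different route from the paper's: you diagonalize the circulant Gram matrix by characters and identify the eigenvalues as $(\chi_j,\alpha)(\overline{\chi_j},\alpha)=\tau(\tilde\chi_j)\tau(\overline{\tilde\chi_j})$, i.e.\ products of Gauss sums, whereas the paper never leaves $\ZZ[G]$: it computes the circulant element $s=\overline{s}$ directly for one ramified prime ($s=p\,t^{(p-1)/2}-\Sigma$ in $\ZZ[(\ZZ/p\ZZ)^*]$), tensors over the ramified primes, and pushes forward along $\ZZ[(\ZZ/f\ZZ)^*]\twoheadrightarrow\ZZ[\ZZ/n\ZZ]$. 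Your Fourier-side description buys a cleaner conceptual statement (the multiset of conductors $\mathfrak{f}(\chi_j)$ is the invariant), while the paper's group-ring computation buys the explicit integral Gram matrix $\sum_d a_d A_d$ advertised in the abstract. Note also that you should justify that the Gaussian period $\alpha=\Tr_{\QQ(\zeta_f)/K}(\zeta_f)$ actually generates a normal integral basis (this is where tameness enters, via $\oo_K=\Tr_{\QQ(\zeta_f)/K}(\ZZ[\zeta_f])$); the paper proves this through the $\Sigma_H$ machinery of Lemmas \ref{invariant}--\ref{2.4}.

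The even case, however, is a genuine gap: you correctly isolate the difficulty (the signs of the eigenvalues when $K$ may be totally complex) and then stop at ``the crux is to show that this signed eigenvalue pattern \dots is still determined by $\mathfrak{d}_K$ alone; once that is established \dots'' --- which is precisely the content of Theorem \ref{even}, left unproved. Moreover, the tool you reach for, Gauss's evaluation of the sign of the quadratic Gauss sum, is not the relevant input. Since $\hat c_j=\tau(\tilde\chi_j)\tau(\overline{\tilde\chi_j})=\tilde\chi_j(-1)\,\mathfrak{f}(\chi_j)$, the sign is $\chi_j(c)$ where $c$ is the image of $-1$ (complex conjugation) in $G\cong\ZZ/n\ZZ$; no archimedean evaluation of $\tau(\chi)$ itself is needed. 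What is needed is the purely group-theoretic observation that under $(\ZZ/p\ZZ)^*\twoheadrightarrow\ZZ/e_p\ZZ$ the element $-1=t^{(p-1)/2}$ maps to $0$ if $h_p=(p-1)/e_p$ is even and to the unique element of order $2$ if $h_p$ is odd. This is the content of Lemma \ref{oneprime} (the dichotomy $pI-h\Sigma$ versus $p\overline{C}-h\Sigma$) and of the factors $Y_i'$ in Lemma \ref{coef2} and \S\ref{General}. Since $h_p$ is determined by $p$ and $e_p$, and $e_p$ by the discriminant via $e_p=n/(n-a_p)$ as you already showed, the sign pattern --- and in particular whether $K$ is totally real or totally complex --- is forced by $\mathfrak{d}_K$ and $n$. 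Supplying this parity computation would close your argument.
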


\begin{remark}
Given two number fields $K, L$, we say they have isometry integral trace, $\displaystyle \left< \oo_K , {\rm Tr}_{K /\QQ} \right> \cong \left< \oo_L , {\rm Tr}_{L /\QQ} \right>,$  if and only if there exist a $\ZZ$-linear isomorphism $\varphi: \oo_K \to \oo_L$ such that
\[{\rm Tr}_{K / \QQ}(x \cdot y) = {\rm Tr}_{L / \QQ}(\varphi(x) \cdot \varphi(y)) \mbox{ for every }\ x,y \in \oo_K. \]
\end{remark}

\begin{remark}
We should note that in the above situation our result implies that the signature of $K$ is determined by its discriminant and its degree; this is not surprising if the degree $n$ is odd since in such a case $K$ is totally real. However, for even $n$ this is saying something not at all obvious not even from the point of view of the genus of the integral trace.
\end{remark}

\subsubsection{A duality between $\Z/n\Z$ and $S_{n}$ number fields}

Let $K$ be a totally real degree $n$ number field, and let $G(K)$ be the Galois group of the Galois closure of $K$ over $\QQ$. If we wanted to try to define a sort of notion of complexity for the field $K$ in terms of the group $G(K)$ we could say that such complexity is very high if $G(K)$ is as big as it can be; i.e., if $G(K) \cong S_{n}.$ At the other side of the spectrum, we could argue that the such complexity is very low if $G(K)$ is as uncomplicated as it can be. For instance, it should have the smallest possible order, $n$, and among those it should have not many automorphisms; for example degree 4 extensions with cyclic Galois group should be ``easier'' than those with Galois group the Klein group. The group $G(K) \cong \Z/n\Z$ meets such requirements. The results presented in this paper are about the behavior of the trace in the low complexity case; in this case, under some ramification assumptions, the trace as an invariant is just the same as the discriminant. In contrast, for the high complexity case (see \cite{M4}) the trace, under some ramification hypotheses as well, is a complete invariant. In other words the strength of the invariant  $\left< \oo_K , {\rm Tr}_{K /\QQ} \right>$ with respect to $\mathfrak{d}(K)$ presents a duality that seems to be determined, at least in the extreme cases, from the complexity of the group $G(K)$. In the recent preprint \cite{PipRob} the authors show that the shape is a complete invariant for $V_4$-quartic fields. As we explained above the complexity of $V_4$-quartic extensions should be greater than that of $\Z/4\Z$-quartics, however intuition says that perhaps it should not be at the same level of $S_4$-quartics. It would be interesting to see if the informal notion of complexity described above really exists or if it is only a fact about $S_n$ and $\Z/n\Z$-extensions.

\subsubsection{Structure of the paper}

In \S \ref{Notation} we set up the notation, and facts, that we will use later in our proofs regarding Hermitian forms over group rings. Then we start with the proofs of our results. The overall strategy is the following: We know by the Hilbert-Speiser theorem that the fields we study have a normal integral basis (NIB). Using Hermitian forms on abelian groups, and the Kronecker-Weber theorem, we  construct a specific NIB and we show that the Gram matrix of the trace, with respect such a basis, depends solely on the discriminant and the degree of the field. This strategy is executed in several stages; in \S \ref{GenralPrimePower} we deal with number fields of prime power degree, there also dealing with different levels:

\begin{itemize}

\item[(a)] First we deal with number fields of prime power discriminant, and odd degree.

\item[(b)] Then we deal with general discriminants, but still odd degree.

\item[(c)] Then we deal with the case of degree a power of $2$.

\end{itemize}

Finally in \S \ref{General}, using that the number field has a cyclic Galois group,  we do a gluing construction to pass from prime power degree to general degree. Here too we must make the distinction between odd and even degrees.

\section{Hermitian forms over group rings}\label{Notation}
Let $G$ be a finite abelian group, and let $\ZZ[G]$ be the group ring of $G$ over $\ZZ$.
Let $X \to \overline{X}$ be the usual involutary ring automorphism of $\ZZ[G]$ such that for every $g \in G,$ $g \to \overline{g} = g^{-1}$. The projection map, which is a morphism of $\ZZ[G]$-modules, and the augmentation map, which is a ring homomorphism, are given by
\begin{eqnarray*}
{\rm Pr}: \ZZ[G] \longrightarrow \ZZ \quad& \qquad \qquad & \qquad \varepsilon: \ZZ[G] \longrightarrow \ZZ\\
{\rm Pr} \left(\sum_{g \in G} d_g g\right) = d_{e}, & \qquad \qquad & \varepsilon \left(\sum_{g \in G} d_g g\right) = \sum_{g \in G} d_{g},
\end{eqnarray*}
where $e \in G$ denotes the identity of $G$. A {\it Hermitian form}
on a left $\ZZ[G]$-module $M$ is a  $\ZZ$-bilinear map
\[ H: M \times M \to \ZZ[G] \]
such that for all $X \in \ZZ[G]$ and $m_1, m_2 \in M$:
\[H(X m_1, m_2) = XH(m_1,m_2),\]
\[ H(m_1, m_2) = \overline{H(m_2, m_1)}.\]
Notice that, since $G$ is abelian, the two conditions above imply that $H(m_1, \overline{X}m_2) =H(X m_1, m_2).$ For example, if
\[ \beta: M \times M \to \ZZ \]
is a $\ZZ$-bilinear and symmetric form such that
\[\beta(X m_1, m_2) = \beta(m_1,\overline{X}m_2)\]
then $\beta$ induces an Hermitian form $H$ given by
\[H(m_1,m_2) = \sum_{g \in G} \beta(g^{-1}m_1, m_2)g.\]

\begin{definition} A {\em symmetric circulant} $\beta$ on $\ZZ[G]$ is a $\ZZ$-bilinear and symmetric form on $\ZZ[G]$ with values in $\ZZ$
such that \[ \beta(gX, gY) = \beta(X,Y) \]
for every $g \in G$ and $X,Y \in \ZZ[G]$.
\end{definition}

If $\beta $ is a symmetric circulant and $H$ the Hermitian form induced by $\beta$, then
\[H(X,Y)= H(e,e)X \overline{Y},\]
for every $X,Y \in \ZZ[G]$. If we denote by
\[ s:= H(e,e) = \sum_{g\in G} \beta(g^{-1}, e) g = \sum_{g \in G} \beta(e, g)g \]
then $s = \bar{s}$ and ${\rm Pr}(sX\overline{Y}) = \beta(X,Y).$ Thus, we have a $1$-to-$1$ correspondence between symmetric circulants and the elements $s$ of the group ring such that
$s = \overline{s} \in \ZZ[G]$. We call $s$ the circulant associated to $H$ or $\beta$  in $\ZZ[G]$. If $\beta$ and $\beta_1$ are symmetric circulants on $\ZZ[G]$ we may ask if there is a $\ZZ[G]$-module automorphism
\[ L: \ZZ[G] \simeq \ZZ[G] \]
such that
\[\beta_{1}(L(X), L(Y)) = \beta(X,Y).\]
Surely $L(X) = XL(e) = XV$ for some unit $V \in \ZZ[G]^{*}$. Thus, for all $X,Y \in \ZZ[G]$
\begin{eqnarray*}
  {\rm Pr}(s_1V \overline{V} X \overline{Y}) &=& \beta_1(L(X), L(Y)) \\
   &=& \beta(X,Y) \\
   &=& {\rm Pr}(sX\overline{Y}).
\end{eqnarray*}

Hence, \[s=s_1 V \overline{V} = V s_1 \overline{V} .\]

In this case we say { \it $s_1$ is congruent to $s$}. This is an equivalence relation on the set of elements $s$ of the group ring such that $s = \overline{s} \in \ZZ[G]$. Thus, the classification of circulants, up to  isometry, is equivalent to the classification of such elements $s$ up to congruence.

\subsection{Induced circulants}
In this subsection we collect some of the basic results about circulants that we will need later in the paper. We do not give proofs of most of the results. For the interested reader proofs can be found in \cite[\S IV.2-3]{Cp}.\\

Let $H \subset G$ be a subgroup, $|H| = h$ and $\chi$ the canonical quotient homomorphism
\[ \chi: G \to G/H.\]

\noindent Then $\chi$ induces a ring homomorphism between group rings

\[ \chi: \ZZ[G] \twoheadrightarrow \ZZ[G/H].\]

We set $\Sigma_H = \sum\limits_{h \in H} h \in \ZZ[G]$, and note that for $X \in \ZZ[H]$, $X \Sigma_H = \varepsilon(X) \Sigma_H$.

\begin{lemma}\label{invariant}
The principal ideal $\left< \Sigma_H \right> \subset \ZZ[G]$ is the ideal of elements fixed under $H$; that is, $X \in \left< \Sigma_H \right>$ if and only if $hX = X$ for all $h \in H$.
\end{lemma}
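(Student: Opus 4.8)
The plan is to prove the two inclusions separately, relying on a single computational fact: left multiplication by any $h \in H$ fixes $\Sigma_H$, i.e. $h\Sigma_H = \Sigma_H$. This holds because the map $h' \mapsto hh'$ merely permutes the summands of $\Sigma_H = \sum_{h' \in H} h'$; equivalently it is the special case $X = h$ of the stated identity $X\Sigma_H = \varepsilon(X)\Sigma_H$ for $X \in \ZZ[H]$.

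For the inclusion $\left< \Sigma_H \right> \subseteq \{\, X \in \ZZ[G] : hX = X \text{ for all } h \in H \,\}$, I would take an arbitrary element $Y\Sigma_H$ of the principal ideal with $Y \in \ZZ[G]$, and, using that $G$ is abelian, compute $h(Y\Sigma_H) = Y(h\Sigma_H) = Y\Sigma_H$ for every $h \in H$. This shows immediately that every element of $\left< \Sigma_H \right>$ is fixed by $H$.

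The substantive direction is the reverse inclusion. Write a fixed element as $X = \sum_{g \in G} d_g g$. Expanding $hX = \sum_{g} d_g (hg)$ and comparing coefficients with $X$ gives $d_{h^{-1}g} = d_g$ for all $g \in G$ and $h \in H$; that is, the coefficient function $g \mapsto d_g$ is constant on the (left, equivalently right) cosets of $H$. Choosing a set of coset representatives $\{g_C\}$, each coset $C = g_C H$ contributes $\sum_{g \in C} g = g_C \Sigma_H$, and if $d_C$ denotes the common coefficient value on $C$ we obtain $X = \left(\sum_C d_C\, g_C\right)\Sigma_H \in \left< \Sigma_H \right>$, which completes the proof.

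The only point requiring genuine care is the coefficient bookkeeping in the reverse direction: translating the $H$-invariance of $X$ into the statement that its coefficients are constant on cosets, and then recognizing the coset sums as the $\ZZ[G]$-multiples $g_C \Sigma_H$. I do not expect a real obstacle here, since the abelianness of $G$ makes left and right cosets coincide and so no left/right subtleties arise; the lemma is essentially a direct coefficient computation.
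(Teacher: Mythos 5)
Your proof is correct and complete: the forward inclusion via $h\Sigma_H=\Sigma_H$ and the reverse inclusion via constancy of coefficients on cosets is exactly the standard argument (the paper itself omits the proof, deferring to Conner--Perlis \S IV.2--3, and your argument is the one given there). No gaps.
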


\begin{lemma}\label{lemm0}
The kernel of
\[\chi: \ZZ[G] \to \ZZ[G/H] \]

is the annihilator ideal in $\ZZ[G]$ of $\Sigma_H$.
\end{lemma}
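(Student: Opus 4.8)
The plan is to prove the two inclusions simultaneously by matching both the kernel and the annihilator to one and the same explicit condition on coefficients. Writing a general element as $X = \sum_{g \in G} d_g\, g$, the induced ring map collects coefficients along cosets, so that $\chi(X) = \sum_{gH \in G/H}\bigl(\sum_{g' \in gH} d_{g'}\bigr)\,gH$. Consequently $X \in \ker \chi$ if and only if the coset-sum $\sum_{g' \in C} d_{g'}$ vanishes for every coset $C \in G/H$.

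Next I would compute the product $X\Sigma_H$ directly. Since $\Sigma_H = \sum_{h \in H} h$ and $H$ is a group, for a fixed $k \in G$ the coefficient of $k$ in $X\Sigma_H = \sum_{g \in G}\sum_{h \in H} d_g\,(gh)$ is $\sum_{h \in H} d_{kh^{-1}} = \sum_{g' \in kH} d_{g'}$, where I have used that $h \mapsto h^{-1}$ permutes $H$. The key observation is that this coefficient depends only on the coset $kH$, so $X\Sigma_H = 0$ precisely when $\sum_{g' \in C} d_{g'} = 0$ for every coset $C$ --- exactly the condition found above for membership in $\ker\chi$. Comparing the two descriptions gives $\ker\chi = \{X \in \ZZ[G] : X\Sigma_H = 0\}$, as claimed.

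I expect no serious obstacle here; the argument is essentially bookkeeping, and the only point requiring care is the recognition that multiplication by $\Sigma_H$ replaces each coefficient by the sum of coefficients over its coset, so that $X\Sigma_H$ is automatically $H$-invariant, consistent with Lemma \ref{invariant}. Alternatively, one could prove one inclusion conceptually: the kernel of $\chi$ is the ideal generated by $\{h - e : h \in H\}$, and each such generator kills $\Sigma_H$ because $h\Sigma_H = \Sigma_H$, giving $\ker\chi \subseteq \{X : X\Sigma_H = 0\}$ at once; the reverse inclusion would then still need the coefficient computation (or a rank count), so I would favor the direct two-sided coefficient comparison as the cleanest route.
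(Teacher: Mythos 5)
Your argument is correct: the coefficient of $k$ in $X\Sigma_H$ is indeed the coset sum $\sum_{g'\in kH} d_{g'}$, and the coefficient of the coset $kH$ in $\chi(X)$ is the same sum, so both the kernel and the annihilator are cut out by the single condition that all coset sums of coefficients vanish. The paper itself gives no proof of this lemma (it defers to Conner--Perlis, \S IV.2--3), and your direct coefficient comparison is exactly the standard argument found there; nothing further is needed.
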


Lemma \ref{lemm0} means the $\ZZ[G]$-module structure of $\left< \Sigma_H \right> \subset \ZZ[G]$
naturally induces a $\ZZ[G/H]$-module structure on this principal ideal. Furthermore,

\[ X\Sigma_H \to \chi(X) \]
is a $\ZZ[G/H]$-module isomorphism of $\left< \Sigma_H \right>$ with $\ZZ[G/H]$.

Now, suppose that $\beta(X,Y)$ is a symmetric circulant on $\ZZ[G]$ and $s = \overline{s} \in \ZZ[G]$ is the associated circulant for which
\[\beta(X,Y) = {\rm Pr}(s X \overline{Y}).\]

Additionally, note that $\chi(s) = \overline{\chi(s)}$. Thus, the image $\chi(s) \in \ZZ[G/H]$ induces a symmetric circulant on $\ZZ[G/H]$. We seek an interpretation of this induced circulant.

\begin{lemma}\label{2.4}
For $X,Y \in \ZZ[G]$ we have

\[\beta\left( X \Sigma_H, Y \right) = {\rm Pr}(\chi(s) \chi(X) \chi(\overline{Y})).\]

\end{lemma}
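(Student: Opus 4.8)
The plan is to unwind both sides using the defining relation $\beta(U,V) = {\rm Pr}(s\,U\,\overline{V})$ and then to compare coefficients directly. First I would write the left-hand side as
\[\beta(X\Sigma_H, Y) = {\rm Pr}\big(s\,(X\Sigma_H)\,\overline{Y}\big).\]
Since $G$ is abelian, $\ZZ[G]$ is commutative, so $\Sigma_H$ is central and this equals ${\rm Pr}\big((sX\overline{Y})\,\Sigma_H\big)$. Thus the whole statement reduces to understanding how the projection ${\rm Pr}$ on $\ZZ[G]$ interacts with multiplication by $\Sigma_H$, after which I simply specialize.

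The key step is the auxiliary identity
\[{\rm Pr}_G(Z\,\Sigma_H) = {\rm Pr}_{G/H}(\chi(Z)) \qquad \text{for all } Z \in \ZZ[G],\]
where I write ${\rm Pr}_G$ and ${\rm Pr}_{G/H}$ for the two projection maps to distinguish them. I would prove this by a direct coefficient count. Writing $Z = \sum_{g\in G} z_g\, g$, one has $Z\Sigma_H = \sum_{g\in G,\, h\in H} z_g\,(gh)$, so the coefficient of $e$ picks out the terms with $gh=e$, i.e. $g=h^{-1}$; as $h$ ranges over $H$ so does $h^{-1}$, giving ${\rm Pr}_G(Z\Sigma_H) = \sum_{h\in H} z_h$. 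On the other hand $\chi(Z) = \sum_{\bar g\in G/H}\big(\sum_{g\in \bar g} z_g\big)\bar g$, whose coefficient at the trivial coset is exactly $\sum_{g\in H} z_g$; hence ${\rm Pr}_{G/H}(\chi(Z)) = \sum_{h\in H} z_h$ as well, and the two sides agree.

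Finally I would apply this identity with $Z = sX\overline{Y}$, obtaining
\[\beta(X\Sigma_H, Y) = {\rm Pr}_G\big((sX\overline{Y})\Sigma_H\big) = {\rm Pr}_{G/H}\big(\chi(sX\overline{Y})\big) = {\rm Pr}\big(\chi(s)\chi(X)\chi(\overline{Y})\big),\]
where the last equality uses that $\chi$ is a ring homomorphism. I do not anticipate a genuine obstacle here, since everything is an elementary manipulation of coefficients; the only points requiring care are that the ${\rm Pr}$ on the right-hand side denotes the projection on the quotient group ring $\ZZ[G/H]$, and that $\chi(\overline{Y}) = \overline{\chi(Y)}$, so the resulting expression is indeed the induced circulant attached to $\chi(s)$, consistent with the already-noted relation $\chi(s) = \overline{\chi(s)}$.
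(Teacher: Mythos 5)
Your proof is correct and complete. The paper itself gives no proof of this lemma (it defers to Conner--Perlis, \S IV.2--3), so there is nothing to compare against directly; your argument is the standard one and fills the gap properly. The reduction to the auxiliary identity ${\rm Pr}_G(Z\Sigma_H) = {\rm Pr}_{G/H}(\chi(Z))$ is exactly the right move, your coefficient count for both sides (each equal to $\sum_{h\in H} z_h$) is accurate, and the final application with $Z = sX\overline{Y}$ together with the fact that $\chi$ is a ring homomorphism closes the argument. Your closing remarks about which projection is meant on the right-hand side and about $\chi(\overline{Y}) = \overline{\chi(Y)}$ address the only two points where a reader could be confused.
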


Furthermore, note that

\begin{eqnarray*}
  \beta(X\Sigma_H, Y \Sigma_H) &=& \beta(X \Sigma_H^2, Y) \\
   &=& \beta(h X \Sigma_H, Y) \\
   &=& h \beta(X \Sigma_H , Y).
\end{eqnarray*}
Also $\displaystyle \chi: \ZZ[G] \to \ZZ[G/H] $ sends the congruence class of $s = \overline{s} \in \ZZ[G]$ to the congruence class of $\chi(s) \in \ZZ[G/H].$

\subsection{Product of circulants}

Let $G_1$ and $G_2$ be finite abelian groups. Using the inclusions $G_1 \subset G_1 \times G_2$; $ G_{1} \to G_1 \times \{ e \}$
and  $G_2 \subset G_1 \times G_2$; $ G_{2} \to \{e \} \times G_2$  we obtain maps $\ZZ[G_{i}] \subset \ZZ[G_{1} \times G_{2}]$. This yields a bilinear form
\[ \ZZ[G_1] \times \ZZ[G_2] \to \ZZ[G_1 \times G_2] \]
given by
\[ (X_1 , X_2) \to X_1 X_2 \in \ZZ[G_1 \times G_2]\]
and further,  an isomorphism
\[ \ZZ[G_1] \otimes_{\ZZ} \ZZ[G_2] \simeq \ZZ[G_1 \times G_2] \]
which sends $X_1 \otimes X_2$ to $X_1X_2$. Therefore, if $X_1 \in \ZZ[G_1] \subset \ZZ[G]$ and $X_2 \in \ZZ[G_2] \subset \ZZ[G]$ then,
\[{\rm Pr}_G(X_1 X_2) = {\rm Pr}_{G_1}(X_1) {\rm Pr}_{G_2}(X_2) \in \ZZ.\]

Hence, the following:

\begin{lemma}
If $s_1 = \overline{s_1} \in \ZZ[G_1]$ is a circulant associated to $\beta_1$ and $s_2 = \overline{s_2} \in \ZZ[G_2]$ is associated to $\beta_2$ then
$s = s_1 s_2 \in \ZZ[G]$ is canonically associated to the product circulant $\beta_1 \otimes \beta_2$.
\end{lemma}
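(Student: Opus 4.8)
The plan is to verify directly that $s = s_1 s_2$ is self-conjugate and that the assignment $(X,Y) \mapsto {\rm Pr}_G(sX\overline{Y})$ reproduces the product form $\beta_1 \otimes \beta_2$. By the $1$-to-$1$ correspondence between self-conjugate elements $s = \overline{s} \in \ZZ[G]$ and symmetric circulants established above, establishing these two facts is enough.

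First I would check that $s = \overline{s}$. Since the embeddings $\ZZ[G_i] \subset \ZZ[G]$ are compatible with the involution (because $(g_i,e)^{-1} = (g_i^{-1},e)$), the identities $\overline{s_1} = s_1$ and $\overline{s_2} = s_2$ persist inside $\ZZ[G]$. As $G$ is abelian, $\ZZ[G]$ is commutative and the involution is multiplicative, so $\overline{s} = \overline{s_1}\,\overline{s_2} = s_1 s_2 = s$. Hence $s$ is a legitimate circulant element and it determines some symmetric circulant through the projection formula.

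Next I would identify that circulant with $\beta_1 \otimes \beta_2$. Both forms are $\ZZ$-bilinear, so it suffices to compare them on the spanning set of pure products $X = X_1 X_2$, $Y = Y_1 Y_2$ with $X_i, Y_i \in \ZZ[G_i]$, which span $\ZZ[G]$ under the tensor identification. For such elements commutativity lets me regroup
\[ s X \overline{Y} = (s_1 s_2)(X_1 X_2)\,\overline{Y_1}\,\overline{Y_2} = \bigl(s_1 X_1 \overline{Y_1}\bigr)\bigl(s_2 X_2 \overline{Y_2}\bigr), \]
with the first factor in $\ZZ[G_1]$ and the second in $\ZZ[G_2]$. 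Applying the multiplicativity of the projection, ${\rm Pr}_G(Z_1 Z_2) = {\rm Pr}_{G_1}(Z_1){\rm Pr}_{G_2}(Z_2)$ for $Z_1 \in \ZZ[G_1]$, $Z_2 \in \ZZ[G_2]$, I then obtain
\[ {\rm Pr}_G(s X \overline{Y}) = {\rm Pr}_{G_1}\!\bigl(s_1 X_1 \overline{Y_1}\bigr)\,{\rm Pr}_{G_2}\!\bigl(s_2 X_2 \overline{Y_2}\bigr) = \beta_1(X_1, Y_1)\,\beta_2(X_2, Y_2), \]
which is precisely $(\beta_1 \otimes \beta_2)(X_1 X_2, Y_1 Y_2)$ by definition of the product form.

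I do not expect a serious obstacle; the argument is a short bookkeeping computation resting on three facts already in place: the multiplicativity of ${\rm Pr}_G$ across the two factors, the commutativity of $\ZZ[G]$, and the compatibility of the involution with the embeddings $\ZZ[G_i] \subset \ZZ[G]$. The one point deserving care is the reduction to pure products $X_1 X_2$: one must record that these span $\ZZ[G]$, so that agreement on them together with bilinearity forces agreement everywhere. As a byproduct, the self-conjugacy of $s$ and the correspondence guarantee that $\beta_1 \otimes \beta_2$ is itself a symmetric circulant, which is what makes the statement well-posed.
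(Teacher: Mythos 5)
Your argument is correct and follows the same route the paper intends: the lemma is presented there as an immediate consequence of the multiplicativity ${\rm Pr}_G(X_1X_2)={\rm Pr}_{G_1}(X_1){\rm Pr}_{G_2}(X_2)$, which is exactly the key step in your computation. The extra care you take with self-conjugacy of $s_1s_2$ and with the reduction to pure products (which span $\ZZ[G]$ since every $g=(g_1,g_2)$ factors as $(g_1,e)(e,g_2)$) just fills in details the paper leaves implicit.
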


\section{Prime power degree}\label{GenralPrimePower}

Let $q$ be a prime and $r$ be a positive integer. In this section we consider tame cyclic number fields of degree  $q^{r}.$ The main goal of this section is to present a canonical Gram Matrix of the quadratic module $\langle \oo_K , \Tr_{K/\QQ} \rangle$ that depends only on the degree and the discriminant of $K$.\\

We state the following well known result since we will use it often.

\begin{lemma}{\label{conductor}}
Let $K/\QQ$ be an abelian extension. Suppose that $K$ is tame. Then, the conductor of $K$ is $f={\rm rad}(\mathfrak{d}(K))$.
\end{lemma}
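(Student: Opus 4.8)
The plan is to combine the Kronecker--Weber theorem with the conductor--discriminant formula, and then to extract squarefreeness of the conductor from the tameness hypothesis by a purely local analysis at each ramified prime. By Kronecker--Weber, $K \subseteq \QQ(\zeta_m)$ for some $m$, and the conductor $f$ is by definition the least such $m$. Realizing $\Gal(K/\QQ)$ as a group $X(K)$ of Dirichlet characters via this embedding, one has the standard description $f = \lcm_{\chi \in X(K)} \mathfrak{f}(\chi)$, where $\mathfrak{f}(\chi)$ denotes the conductor of $\chi$. This is the form of the conductor I would work with throughout.

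First I would pin down the prime support of $f$. By the conductor--discriminant formula, $|\mathfrak{d}(K)| = \prod_{\chi \in X(K)} \mathfrak{f}(\chi)$, so a rational prime $p$ divides $\mathfrak{d}(K)$ if and only if it divides $\mathfrak{f}(\chi)$ for some $\chi \in X(K)$, which by the displayed formula for $f$ happens if and only if $p \mid f$. Hence $f$ and ${\rm rad}(\mathfrak{d}(K))$ have exactly the same prime divisors; since ${\rm rad}(\mathfrak{d}(K))$ is squarefree by definition, it remains only to show that $f$ itself is squarefree, i.e. that $v_p(f) \le 1$ for every ramified prime $p$.

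This is where tameness enters, and I expect it to be the crux of the argument. Fixing a prime $p$ ramified in $K$, the $p$-part of $f$ is $p^{c_p}$ with $c_p = \max_{\chi \in X(K)} v_p(\mathfrak{f}(\chi))$, so it suffices to show $v_p(\mathfrak{f}(\chi)) \le 1$ for each $\chi$. The $p$-part of such a character is a character of $(\ZZ/p^{k}\ZZ)^{*}$ for some $k$, and its conductor exponent exceeds $1$ precisely when $\chi$ fails to factor through the tame quotient of the local inertia group at $p$, i.e. precisely when $\chi$ is nontrivial on the wild (pro-$p$) inertia. Tameness of $K$ at $p$ says exactly that the image of wild inertia in $\Gal(K/\QQ)$ is trivial, so every $\chi \in X(K)$ is trivial on wild inertia and therefore has $v_p(\mathfrak{f}(\chi)) \le 1$. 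Concretely, for odd $p$ one invokes that $(\ZZ/p^k\ZZ)^{*}$ has a unique subgroup of order prime to $p$, namely the image of the Teichm\"uller lift, through which all tamely ramified characters factor, forcing the conductor to divide $p$; the prime $p=2$ is handled by the analogous explicit description of $(\ZZ/2^k\ZZ)^{*}$.

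Putting these together gives $f = \prod_{p \mid \mathfrak{d}(K)} p = {\rm rad}(\mathfrak{d}(K))$, as claimed. The only delicate point is the local identification of tame ramification with conductor exponent at most $1$; everything else is bookkeeping with the conductor--discriminant formula and Kronecker--Weber.
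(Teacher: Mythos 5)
Your proof is correct and takes essentially the same route as the paper, whose entire argument is the one-line observation that the conductor is the product over ramified primes of the local conductors (each equal to $p$ by tameness), with a reference to Narkiewicz. Your character-theoretic unpacking via $f=\lcm_{\chi}\mathfrak{f}(\chi)$ and the conductor--discriminant formula is just the standard way of making that citation precise, and the tameness step (characters trivial on wild inertia have conductor exponent at most $1$ at $p$) is exactly the local fact the paper is invoking.
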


\begin{proof}
This follows from the fact that the conductor is the product over ramified primes of the local conductors. See also  \cite[Proposition 8.1]{Na}.
\end{proof}

\subsection{One Prime Ramifying}
Throughout $K$ denotes a tame cyclic number field of degree $q^r$, and $p \neq q$ the only prime ramifying in $K$.\\

\noindent

Thanks to Lemma \ref{conductor} we know that $ K \subset \QQ(\eta_{p})$, where $\eta_p$ is a primitive $p$-th root of unity. Furthermore, since $\Gal(\QQ(\eta_p) / \QQ) \simeq (\ZZ/p\ZZ)^*$ is cyclic, $K$ is the only subfield of $\QQ(\eta_p)$ of degree $q^r$. Also $q^r | (p-1).$
The ring of integers of $\QQ(\eta_p)$ is $\ZZ[\eta_p]$ and $\{\eta_p , \eta_p^2, \dots, \eta_p^{p-1} \}$ is a normal integral basis of $\ZZ[\eta_p]$. We endow $\ZZ[\eta_p]$ with a structure of a  $\ZZ[(\ZZ/p\ZZ)^*]$-module in the following way:
\[ g\cdot \eta_{p} = \eta_{p}^{g} \]
for every $g \in (\ZZ/p\ZZ)^*$ and extend by linearity. Thus, we have an isomorphism of $\ZZ[(\ZZ/p\ZZ)^*]$-modules of rank 1.

\begin{eqnarray}
\varphi:\ZZ[(\ZZ/p\ZZ)^*] &  \to &  \ZZ[\eta_p] \\
X = \sum a_ig_i  & \to & \sum a_i g_i\cdot \eta_p. \nonumber
\end{eqnarray}
Additionally, we define a symmetric circulant $\beta$ on $\ZZ[(\ZZ/p\ZZ)^*]$ by
\[\beta(X, Y) := \Tr_{\QQ(\eta_p)/ \QQ}(\varphi(X) \varphi(Y))\]

\begin{lemma} Suppose that $t$ is a generator of $(\ZZ/p\ZZ)^*$. Then, the associated circulant of $\beta$ is \[ s = p t^{(p-1)/2} - \Sigma_{(\ZZ/p\ZZ)^*} \]
and $t^{(p-1)/2} \in (\ZZ/p\ZZ)^*$ is independent of the choice of $t$.
\end{lemma}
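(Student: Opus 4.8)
The plan is to compute the associated circulant $s$ directly from the formula $s = \sum_{g \in G} \beta(e,g)\, g$ with $G = (\ZZ/p\ZZ)^*$, which is exactly the expression for $s$ recorded in the preliminaries (the two forms $\sum_g \beta(g^{-1},e)g$ and $\sum_g \beta(e,g)g$ agreeing because the circulant invariance $\beta(gX,gY)=\beta(X,Y)$ gives $\beta(g^{-1},e)=\beta(e,g)$). The whole argument then reduces to evaluating the single family of integers $\beta(e,g)$, and the only external input needed is the classical value of the trace of a power of $\eta_p$.

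First I would unwind the definition of $\varphi$: since $\varphi(e) = e\cdot\eta_p = \eta_p$ and $\varphi(g) = g\cdot\eta_p = \eta_p^{g}$, I obtain
\[ \beta(e,g) = \Tr_{\QQ(\eta_p)/\QQ}\!\bigl(\eta_p \cdot \eta_p^{g}\bigr) = \Tr_{\QQ(\eta_p)/\QQ}\!\bigl(\eta_p^{1+g}\bigr). \]
I would then recall that, writing the Galois action as $\sigma_a(\eta_p)=\eta_p^a$ for $a\in G$, one has $\Tr_{\QQ(\eta_p)/\QQ}(\eta_p^{k}) = \sum_{a\in G}\eta_p^{ka}$, which equals $p-1$ when $p\mid k$ (all summands are $1$) and equals $-1$ otherwise (as $a$ runs over $G$ so does $ka$, and $\sum_{b\in G}\eta_p^{b}=-1$). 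Applying this with $k=1+g$ shows that $\beta(e,g)=p-1$ precisely when $g\equiv -1 \pmod p$, and $\beta(e,g)=-1$ for every other $g$.

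The structural point that both cleans up the formula and yields the independence statement is that $-1$ is the unique element of order $2$ in the cyclic group $G$ of even order $p-1$; hence $-1 = t^{(p-1)/2}$ for every generator $t$, and in particular $t^{(p-1)/2}$ does not depend on the chosen $t$. Substituting the computed values of $\beta(e,g)$ into $s=\sum_{g\in G}\beta(e,g)\,g$ then gives
\[ s = (p-1)\,t^{(p-1)/2} - \sum_{g\neq t^{(p-1)/2}} g = (p-1)\,t^{(p-1)/2} - \bigl(\Sigma_{(\ZZ/p\ZZ)^*} - t^{(p-1)/2}\bigr) = p\,t^{(p-1)/2} - \Sigma_{(\ZZ/p\ZZ)^*}, \]
which is the claimed expression. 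I do not expect a genuine obstacle: this is a short direct computation. The only points requiring care are fixing the correct normalization of $s$ so that ${\rm Pr}(sX\overline{Y})=\beta(X,Y)$ holds, and correctly isolating the exceptional exponent $g=-1$ at which the trace jumps from $-1$ to $p-1$, since the identification $-1 = t^{(p-1)/2}$ is what accounts for the extra summand that upgrades the coefficient $p-1$ to $p$.
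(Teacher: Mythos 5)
Your proposal is correct and follows essentially the same route as the paper: compute the coefficients $\beta(e,g)=\Tr_{\QQ(\eta_p)/\QQ}(\eta_p^{1+g})$ via the standard trace values $p-1$ (when $p\mid k$) and $-1$ (otherwise), identify the exceptional element as $-1=t^{(p-1)/2}$, the unique element of order $2$, and sum. The only cosmetic difference is that you index by the group element $g$ and locate the jump at $g\equiv -1\pmod p$ directly, whereas the paper parametrizes $g=t^j$ and locates it at $j=(p-1)/2$; the content is identical.
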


\begin{proof} The relationship between $s$ and $\beta$ is given by
\[ s = \overline{s} = \sum_{g \in (\ZZ/p\ZZ)^*} \beta(I,g)g.\]
Our goal is to calculate the coefficients $\beta(I,g)$ for every $g$. For this purpose, let $t$ be a generator of $(\ZZ/p\ZZ)^*$, then each $g \in (\ZZ/p\ZZ)^*$ is equal to $t^{j}$ for some $ 1 \leq j \leq p-1 $.
Now, suppose that $\varphi(t) = t(\eta_p) = \eta_p^{r}$ for some $ 1 < r \leq p-1 $, then $\varphi(t^j) = \eta_p^{r^j}$ and

\begin{eqnarray*}
\beta(I,t^j) & = & \Tr_{\QQ(\eta_p)/ \QQ} (\eta_p \cdot \eta_p^{r^j}) \\
& = & \Tr_{\QQ(\eta_p)/ \QQ}(\eta_p^{r^j + 1}) \\
& = & \left\{ \begin{array}{lc} p-1 & \mbox{ if } \; r^j + 1 \equiv 0 \mod p \\ -1 & \mbox{ otherwise.}  \end{array} \right.
\end{eqnarray*}

\noindent But, $r^j + 1 \equiv 0 \mod p$ only when $j = \frac{p-1}{2}$ no matter the choice of the generator $t$. Hence, we have

\begin{eqnarray*}
s & = & \sum_{g \in (\ZZ/p\ZZ)^*} \beta(I,g) g \\
& = & \sum_{j = 0}^{p-1} \beta(I,t^j) t^j \\
& = & p t^{(p-1)/2} - \Sigma_{(\ZZ/p\ZZ)^*}.
\end{eqnarray*}
\end{proof}

\begin{lemma}{\label{oneprime}}
Let $K$ be a cyclic number field of degree $q^r$ with discriminant dvivisible by only one prime $p \neq q$, and let $h = \frac{p-1}{q^r}.$ Then, there is a normal integral basis of $\oo_K$ such that the Gram matrix of the trace in such basis is equal to 
\[\begin{pmatrix} p - h & -h & \dots & -h \\ -h & p - h & \dots & - h \\ \vdots & \vdots & \ddots &  \vdots \\  - h & - h & \dots & p - h  \end{pmatrix}\] if $K$  is totally real, or equal to

\[ \left( \begin{array}{ccc|ccc}  - h &  \dots & -h & p -h &  \dots & -h  \\
\vdots &  \ddots & \vdots &  \vdots  & \ddots & \vdots \\
- h &  \dots& -h  & -h  & \dots & p-h \\ \hline
p -h &  \dots & -h &  - h  & \dots & -h \\
 \vdots  & \ddots & \vdots &   \vdots   & \ddots & \vdots  \\
 - h  & \dots& p-h  & -h & \dots & -h \end{array} \right)\]
if $K$ is totally complex.
\end{lemma}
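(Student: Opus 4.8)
The plan is to realize $\oo_K$ as the $H$-fixed ideal $\langle \Sigma_H\rangle$ inside $\ZZ[(\ZZ/p\ZZ)^*]$ and to identify the integral trace of $K$ with the circulant induced by $s$ under the quotient map $\chi$. Write $G = (\ZZ/p\ZZ)^*$ and let $H \subset G$ be the subgroup of order $h$ with $K = \QQ(\eta_p)^H$, so that $G/H \cong \Gal(K/\QQ)$ is cyclic of order $q^r$. First I would observe that $\theta := \varphi(\Sigma_H) = \sum_{\sigma \in H}\eta_p^{\sigma} = \Tr_{\QQ(\eta_p)/K}(\eta_p)$ is a Gaussian period, and that its $G/H$-conjugates form a normal integral basis of $\oo_K$; equivalently, under $\varphi$ the principal ideal $\langle \Sigma_H\rangle$ (which by Lemma \ref{invariant} is the $H$-fixed part of $\ZZ[G]$, and by Lemma \ref{lemm0} carries a natural $\ZZ[G/H]$-structure) is carried isomorphically onto $\oo_K$ as a $\ZZ[G/H]$-module. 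This is exactly the step where the single-ramified-prime, tameness hypothesis enters, guaranteeing that the period lattice is the full ring of integers.

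Next I would compute the trace form in this basis. For $x = \varphi(X\Sigma_H)$ and $y = \varphi(Y\Sigma_H)$ in $\oo_K$, the definition of $\beta$ gives $\beta(X\Sigma_H, Y\Sigma_H) = \Tr_{\QQ(\eta_p)/\QQ}(xy) = h\,\Tr_{K/\QQ}(xy)$, the last equality because $xy \in K$ and $\Tr_{\QQ(\eta_p)/K}(1) = h$. On the other hand the displayed computation following Lemma \ref{2.4} gives $\beta(X\Sigma_H, Y\Sigma_H) = h\,\beta(X\Sigma_H, Y) = h\,{\rm Pr}(\chi(s)\chi(X)\chi(\overline{Y}))$ by Lemma \ref{2.4}. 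Cancelling the common factor $h$ yields
\[\Tr_{K/\QQ}(xy) = {\rm Pr}\big(\chi(s)\,\chi(X)\,\chi(\overline{Y})\big),\]
so the Gram matrix of the integral trace of $K$ in the period basis is precisely the Gram matrix of the circulant $\chi(s)$ on $\ZZ[G/H]$. The point of this step is that the two factors of $h$ cancel, leaving a circulant with no extraneous scaling.

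It then remains to evaluate $\chi(s)$ and read off its Gram matrix. Applying $\chi$ to $s = p\,t^{(p-1)/2} - \Sigma_{G}$ and using that each coset of $H$ has $h$ elements, I get $\chi(s) = p\,\overline{t}^{\,(p-1)/2} - h\,\Sigma_{G/H}$, where $\overline{t} = \chi(t)$ generates $G/H$. The element $\overline{t}^{\,(p-1)/2}$ is the image of complex conjugation $c = t^{(p-1)/2}$, the unique involution of $G$, and here the dichotomy appears: $c \in H$ if and only if $2 \mid h$, which is exactly the condition that $K$ be totally real. In that case $\overline{t}^{\,(p-1)/2} = e$, so $\chi(s) = p\,e - h\,\Sigma_{G/H}$; since the Gram entry between the $i$-th and $j$-th period is the coefficient of $\sigma_j\sigma_i^{-1}$ in $\chi(s)$, the diagonal equals $p-h$ and every off-diagonal entry equals $-h$, giving the first matrix. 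If $2 \nmid h$ then $c \notin H$, and $\overline{t}^{\,(p-1)/2}$ is a nontrivial involution of $G/H$; as a cyclic group of odd order has no involution this forces $q = 2$ (consistent with odd degree always being totally real), and the involution is $\overline{t}^{\,2^{r-1}}$. The coefficient $p-h$ now occupies the entries $(i,j)$ with $\sigma_j = \overline{t}^{\,2^{r-1}}\sigma_i$, i.e.\ those shifted by half the period, which produces exactly the block form of the second matrix.

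The main obstacle I anticipate is the first step: rigorously identifying $\oo_K$ with the Gaussian period lattice $\varphi(\langle \Sigma_H\rangle)$, that is, proving that the conjugate periods form a normal integral basis under the tameness hypothesis. Once that identification and the induced-circulant dictionary are in hand, the evaluation of $\chi(s)$ and the extraction of its Gram matrix are routine; the only remaining care is to order the period basis cyclically so that the involution $\overline{t}^{\,2^{r-1}}$ lands in the ``middle'', matching the block partition displayed in the totally complex case.
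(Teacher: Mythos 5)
Your proposal is correct and follows essentially the same route as the paper: identifying $\oo_K$ with $\varphi(\langle\Sigma_H\rangle)$ via the Gaussian period $\Tr_{\QQ(\eta_p)/K}(\eta_p)$, using Lemmas \ref{invariant}, \ref{lemm0} and \ref{2.4} together with the relation $\Tr_{\QQ(\eta_p)/\QQ}(xy)=h\,\Tr_{K/\QQ}(xy)$ to cancel the factor $h$ and show that $\chi(s)$ is the circulant of the integral trace in the period basis, and then splitting into cases according to whether $t^{(p-1)/2}\in H$ (equivalently $h$ even, equivalently $K$ totally real). The only difference is cosmetic: the paper packages the cancellation of $h$ into a commutative diagram of quadratic modules rather than writing out the two evaluations of $\beta(X\Sigma_H,Y\Sigma_H)$ as you do.
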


\begin{proof} Since $p$ is the only prime ramifying in $K$, Lemma \ref{conductor} says that $K \subset \QQ(\eta_p)$. Thus, for every $x,y \in \oo_K$, by transitivity of the trace
\begin{eqnarray*}
\Tr_{\QQ(\eta_p) / \QQ}(xy) & =  &\Tr_{K/ \QQ}\left( \Tr_{\QQ(\eta_p)/ K}(xy) \right) \\
& = &\Tr_{K/ \QQ}(h xy)\\
& = & h \cdot \Tr_{K/ \QQ}(xy).
\end{eqnarray*}
If $t$ is a generator of $G = \Gal(\QQ(\eta_p)/ \QQ)$ then  $H = \left< t^{q^r} \right>$ is the only subgroup of $(\ZZ/p\ZZ)^*$ of order $h:=\frac{p-1}{q^r}$, $K$ is the fixed field of $H$ and $K$ is totally real if and only if $|H| = \frac{p-1}{q^r}$ is even. The last part  part follows since otherwise $K$ would be totally complex. In addition, if $h$ is even, then $t^{(p-1)/2} = (t^{q^r})^{h/2} \in H$, ${\rm otherwise}$ $t^{(p-1)/2} \notin H$.\\

On the other hand, under the isomorphism $\varphi$ defined in $(1)$ we have $\varphi(\left< \Sigma_H\right>) = \oo_K$ and if we set $\mathbb{e}_{1} = \Tr_{\QQ(\eta_p) / K}(\eta_p)$ then the action of $\Gal(K/\QQ)$ over $\mathbb{e}_1$ generates a normal integral basis $\mathbb{e}:=\{\mathbb{e}_1, \mathbb{e}_2, \dots, \mathbb{e}_{q^r} \}$ of $\oo_K$.\\

Denote by $\chi$ the epimorphism from $\Gal(\QQ(\eta_p) / \QQ)$ to $\Gal(K / \QQ)$

\[ \chi: G \to G/H \]
\[ \sigma \to \sigma \restriction_{K} \]

\noindent and extend this to a ring homomorphism

\[\chi: \ZZ[G] \to \ZZ[G/H] \simeq \ZZ[\ZZ/q^r\ZZ].\]

\noindent Under this homomorphism we obtain

\begin{eqnarray*}
\chi\left(\Sigma_G \right) & = & h \Sigma_{G/H},\\
\chi(t^{(p-1)/2}) & = & I_{G/H}, \; \mbox{\, if } h \mbox{ is even,} \\
\chi(t^{(p-1)/2}) & = & \overline{C}, \qquad \mbox{ if } h \mbox{ is odd,}
\end{eqnarray*}

\noindent where $I$ is the identity map and $\overline{C}$ is the conjugation map $\overline{C}: K \to K$. Hence,
\begin{eqnarray*}
\chi(s) & = & pI_{G/H} - h \Sigma_{G/H} \qquad \mbox{\;\, if } h \mbox{ is even,}  \\
 \chi(s) & = & p \overline{C} - h \Sigma_{G/H} \qquad \qquad\mbox{ if } h \mbox{ is odd.}
\end{eqnarray*}
From Lemmas \ref{invariant}, \ref{lemm0} and \ref{2.4} the following diagram is commutative,

\[ \xymatrix{
 & \left<\ZZ[G], \beta \right> \ar[rr]^{\varphi} \ar[dl]_{\chi} & & \left<\ZZ[\eta_p], \Tr_{\QQ(\eta_p) / \QQ}( ) \right>  \\
\left<\ZZ[\ZZ/q^r\ZZ], h \ \widetilde{\beta} \right>  \ar[r]^{\simeq} & \left<\left( \Sigma_{H} \right),\beta \right> \ar@{^(->}[u] \ar[rr]^{\simeq} & & \left< \oo_K , h \ \Tr_{K / \QQ}() \right> \ar@{^(->}[u]
}\]

\noindent we conclude that $\chi(s)$ is the circulant associated to $\Tr_{K/\QQ}$ in the basis $\mathbb{e}$. Therefore the Gram matrix of the trace in the basis $\mathbb{e}$ is

\[\begin{pmatrix} p - h & -h & \dots & -h \\ -h & p - h & \dots & - h \\ \vdots & \vdots & \ddots &  \vdots \\  - h & - h & \dots & p - h  \end{pmatrix} \quad \mbox{ if }\, K \, \mbox{is totally real,}\]

and

\[\left( \begin{array}{ccc|ccc}  - h &  \dots & -h & p -h &  \dots & -h  \\
\vdots &  \ddots & \vdots &  \vdots  & \ddots & \vdots \\
- h &  \dots& -h  & -h  & \dots & p-h \\ \hline
p -h &  \dots & -h &  - h  & \dots & -h \\
 \vdots  & \ddots & \vdots &   \vdots   & \ddots & \vdots  \\
 - h  & \dots& p-h  & -h & \dots & -h \end{array} \right) \quad \mbox{ if }\, K \, \mbox{is totally complex.}\]

\end{proof}

\subsection{Several primes ramifying}
Throughout this section $K$ will denote a tame cyclic number field of degree $q^r$. Let $p_1, p_2, \dots, p_{n}$  be the primes ramifying in $K$. We will denote by $e_{p_i}$ the ramification index of $p_{i}$ in $K$. By Lemma \ref{conductor}, $f= {\rm rad}(\mathfrak{d}(K))$ is the conductor of $K$. The following diagram illustrates this situation:

\[\displaystyle \xymatrix{ & & \QQ(\eta_f) & &  \\
\QQ(\eta_{p_1}) \ar[rru]& \cdots & K\ar[u] & \cdots & \QQ(\eta_{p_{n}}) \ar[llu] \\
& &  & & \\
& & \QQ \ar[uu]_{\ZZ/q^r\ZZ} \ar[rruu]_{(\ZZ / p_{n} \ZZ)^{*}} \ar[lluu]^{(\ZZ / p_1 \ZZ)^{*}} \ar[ruu] \ar[luu] &  &}
\]

We denote by $\chi: \Gal( \QQ(\eta_f) / \QQ)  \to  \Gal(K/ \QQ)$ the canonical group homomorphism

\begin{eqnarray*}
 \chi: (\ZZ/ f\ZZ)^* & \to & \ZZ/ q^r\ZZ \\
\sigma & \to & \sigma \restriction_K
\end{eqnarray*}

\begin{lemma}\label{restriction} For every $p_i$ the image of the restriction
$\chi \restriction_{(\ZZ / p_i \ZZ)^{*}} : (\ZZ / p_i \ZZ)^{*} \to \ZZ/q^r\ZZ$ is  $\mathcal{V}_{p_i}(K / \QQ)$, the ramification group of $p_i$ in $K/\QQ$.
\end{lemma}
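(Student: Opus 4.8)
The plan is to realize both sides of the claimed equality as images of inertia groups under the restriction homomorphism $\chi$, and then to invoke the standard behavior of inertia in a tower of Galois extensions. Throughout I write $L = \QQ(\eta_f)$, so that $\Gal(L/\QQ) \cong (\ZZ/f\ZZ)^*$ and, since $f = {\rm rad}(\mathfrak{d}(K)) = p_1 \cdots p_n$ is squarefree, the Chinese Remainder Theorem gives a canonical decomposition $(\ZZ/f\ZZ)^* \cong \prod_{j=1}^{n} (\ZZ/p_j\ZZ)^*$ under which $(\ZZ/p_i\ZZ)^*$ is exactly the subgroup appearing in the statement. Recall also that, in the tame setting, the ramification group $\mathcal{V}_{p_i}(K/\QQ)$ is just the inertia group of $p_i$ in $K/\QQ$.

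First I would identify $(\ZZ/p_i\ZZ)^*$, viewed inside $\Gal(L/\QQ)$, with the inertia group of $p_i$ in $L/\QQ$. Since $p_i$ divides $f$ exactly once, $p_i$ is tamely ramified in $L$ and its ramification index equals $\varphi(p_i) = p_i - 1 = |(\ZZ/p_i\ZZ)^*|$; moreover the fixed field of the CRT factor $(\ZZ/p_i\ZZ)^*$ is $\QQ(\eta_{f/p_i})$, in which $p_i$ is unramified. As $L/\QQ$ is abelian, the inertia group does not depend on the choice of a prime of $L$ above $p_i$, and comparing orders with fixed fields shows that this inertia group is precisely the factor $(\ZZ/p_i\ZZ)^*$.

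Next I would use the fact that restriction carries inertia onto inertia in towers: for the tower $\QQ \subset K \subset L$ of Galois extensions and a prime $\mathfrak{P}$ of $L$ above $p_i$ with $\mathfrak{p} = \mathfrak{P} \cap \oo_K$, the restriction homomorphism $\Gal(L/\QQ) \to \Gal(K/\QQ)$ maps the inertia group of $\mathfrak{P}$ onto the inertia group of $\mathfrak{p}$. Since that homomorphism is precisely $\chi$ and its source inertia group is $(\ZZ/p_i\ZZ)^*$ by the previous step, the image $\chi\restriction_{(\ZZ/p_i\ZZ)^*}\bigl((\ZZ/p_i\ZZ)^*\bigr)$ equals the inertia group of $p_i$ in $K/\QQ$, namely $\mathcal{V}_{p_i}(K/\QQ)$; once more the abelianness of $K/\QQ$ removes any dependence on the chosen prime.

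The argument is essentially bookkeeping with standard ramification theory, so I expect no serious obstacle. The one point that demands care is the first step: pinning down that the CRT factor $(\ZZ/p_i\ZZ)^*$ really is the inertia subgroup of $p_i$ in $\QQ(\eta_f)$, rather than merely a subgroup of the correct order. This is exactly why I would argue through its fixed field $\QQ(\eta_{f/p_i})$, in which $p_i$ is unramified, so that order and fixed field together force the identification.
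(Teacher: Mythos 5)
Your argument is correct and follows essentially the same route as the paper: both identify the CRT factor $(\ZZ/p_i\ZZ)^*$ with the inertia group of $p_i$ in $\QQ(\eta_f)/\QQ$ by noting that its fixed field $\QQ(\eta_{f/p_i})$ is unramified at $p_i$ (so inertia lands in that factor) and then comparing orders via $e_{p_i}(\QQ(\eta_f)/\QQ)=p_i-1$, and both conclude by the surjectivity of restriction from inertia onto inertia in the tower $\QQ\subset K\subset\QQ(\eta_f)$. No gaps.
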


\begin{proof} Let $P$ be a prime in $\ZZ[\eta_f]$ above of $p_i$ and $P_i:= \ZZ[\eta_{p_i}] \cap P$. Since since $p_i$ is \text{unramified} in $\QQ(\eta_{p_j})$ with $j \neq i$, it is unramified in their compositum. Under the Galois correspondence, this latter field corresponds to the subgroup ${\rm Gal}(\QQ(\eta_{p_{i}})/\QQ) \leq {\rm Gal}(\QQ(\eta_{f})/\QQ)$. Since the inertia subfield of any prime above $p_i$ in $\QQ(\eta_{f})$ is the
maximal subfield in which $p_i$ is unramified, see \cite[Proposition 6.8]{Na}, \[\mathcal{V}_P(\QQ(\eta_f)/ \QQ) \leq {\rm Gal}(\QQ(\eta_{p_{i}})/\QQ).\] In fact, this is an equality since the ramification index of $p_{i}$ in $\QQ(\eta_{p_{i}})$ is $\phi(p_{i}).$ Thus we have isomorphisms
\[\mathcal{V}_P(\QQ(\eta_f)/ \QQ)  \simeq (\ZZ / p_i \ZZ)^{*} \simeq \mathcal{V}_{P_i}(\QQ(\eta_{p_i})).\]

On the other hand, the image of the restriction of the canonical map
\[ \chi: (\ZZ / f\ZZ)^{*} \to \ZZ/{q^r\ZZ} \]
\[ \chi(\sigma) = \sigma \restriction_F \]

to $\mathcal{V}_P(\QQ(\eta_f) / \QQ)$ is $\mathcal{V}_{P_i}(K/ \QQ)$. Finally, the composition
\[\displaystyle \xymatrix{ \mathcal{V}_{P_i}(\QQ(\eta_{p_i})) \simeq   (\ZZ / p_i \ZZ)^* \ar[rr]^{\chi\restriction_{(\ZZ/ p_i \ZZ)^*}} \ar@{^(->}[rd] &  & \ZZ/q^r\ZZ   \\
 & \mathcal{V}_{P}(\QQ(\eta_f)) \subset (\ZZ / f\ZZ)^{*} \ar[ru]_{\chi} & }\]

sends $(\ZZ / p_i \ZZ)^*$ onto $\mathcal{V}_{P_i}(K / \QQ).$
\end{proof}

\begin{corollary} Following the notation above, for every $p_i$ with $i=1,...,n$ \[p_{i} \equiv 1 \pmod{e_{p_i}}.\]
\end{corollary}

\begin{proof}

If $p_i$ ramifies in $K$, then $\mathcal{V}_{p_i}(K / \QQ) \subset \ZZ/{q^r\ZZ}$ is not trivial. Additionally, since \[ \chi:  (\ZZ / p_i \ZZ)^* \to \mathcal{V}_{p_i}(K / \QQ) \]
is onto, then taking cardinalities we conclude that $e_{p_i} \big| (p_i-1)$.

\end{proof}

We proceed now to analyze the general case. Remember that $K$ will denote a cyclic number field of degree $q^r$, $q$ a prime number, and $p_1, p_2, \dots, p_{n}$ the primes ramifying in $K$. Also we suppose that $K$ is tame. \\

By Lemma \ref{restriction} we know that for every $i$, $p_i \equiv 1 \mod e_{p_i}$, and the restriction

\[ \chi \restriction{(\ZZ / p_i \ZZ)^{*}}: (\ZZ / p_i \ZZ)^{*}  \twoheadrightarrow \ZZ/e_{p_i}\ZZ \hookrightarrow \ZZ/q^r\ZZ \]

is onto. Additionally, since for every $i$, $(\ZZ/ p_i \ZZ)^*$ is a cyclic group of order $p_i - 1$ and $e_{p_i} \big| (p_i -1)$, then there exist an unique subgroup $H_i \subset \left( \ZZ / p_i \ZZ \right)^{*}$ for which the quotient is cyclic group of order $e_{p_i}$. This subgroup must be the kernel of the restriction of the canonical homomorphism

\[ \chi \restriction{\left(\ZZ/ p_i \ZZ \right)^*} : \left(\ZZ/ p_i \ZZ \right)^* \to \ZZ/q^r\ZZ.\]

Thus the kernel of

\[ \chi: \left(\ZZ / f \ZZ \right)^* \twoheadrightarrow \ZZ/q^r\ZZ \]

contains the product

\[\tilde{H}:= H_1\times H_2 \times \dots \times H_{n},\]

since $\tilde{H} \subset \ker(\chi)$, then $\chi$ factors through the quotient epimorphism
\[(\ZZ/ f \ZZ)^* \twoheadrightarrow (\ZZ/ f \ZZ)^*/ \tilde{H}  \]
to produce
\[\chi' : (\ZZ/ f \ZZ)^*/ \tilde{H} \twoheadrightarrow \ZZ/q^r\ZZ.\]
That is, the following diagram is commutative

\[ \xymatrix{ &(\ZZ / f\ZZ)^*/ \tilde{H} \ar@{->>}[rd]^{\chi'}  & \\ (\ZZ / f\ZZ)^* \ar@{->>}[rr]^{\chi} \ar[ru]& & \ZZ/q^r\ZZ.}
\]

Furthermore, the kernel of $\chi'$ is $ \ker(\chi)/ \tilde{H}
$, and for each $i$ the restriction of $\chi'$ to $ G_i := (\ZZ/ p_i \ZZ)^*/ H_i$  is an isomorphism
\[\chi' \restriction_{G_i}: G_i \simeq \ZZ/e_{p_i}\ZZ.\]

Letting $F$ be the fixed field of $\tilde{H}$ we have that $K \subset F \subset \QQ(\eta_f)$, moreover if for each $i$ we define $F_{i}: = {\rm Fix}(H_i)$ then the Galois correspondence yields
\[F_{i} \subset \QQ(\eta_{p_i}) \subset \QQ(\eta_f);  \ F=F_{1} \dots F_{n}\]
and $G_i \cong \Gal(F_{i}/ \QQ)$.\\

{\small
\[ \xymatrix{ &  & &\QQ(\eta_f)& & &  \\
&  & &F = F_1 \dots F_{n} \ar[u]_{\tilde{H}}& &  &  \\
\QQ(\eta_{p_1})\ar[rrruu]& \cdots & \QQ(\eta_{p_i}) &  K \ar[u]&  \QQ(\eta_{p_j}) & \cdots & \QQ(\eta_{p_{n}}) \ar[llluu]  \\
F_1 \ar[u]^{H_1}& &F_i\ar[u]^{H_i} &  &  F_j \ar[u]_{H_j}& &F_{n} \ar[u]_{H_{n}}  \\
&  & &\QQ \ar[rrru]_{\ZZ/e_{n}\ZZ} \ar[lllu]^{\ZZ/e_{p_1}\ZZ}  \ar[ru]^{\ZZ/e_{p_j}\ZZ} \ar[lu]_{\ZZ/e_{p_i}\ZZ} \ar[uu]_{\ZZ/q^r\ZZ} & &  & }\]
}

In particular, each extension $F_{i} / \QQ$ has degree $e_{p_i}$, Galois group $\ZZ/e_{p_i}\ZZ \simeq G_i$, and $p_i$ is the only prime ramifying.

Thus, by Lemma \ref{oneprime} we can find a normal integral basis $\{w_i^{j}\}_j$ of $F_i$  such that
\[\varphi_i : \ZZ[G_i] \simeq \oo_{F_{i}}\]
is given by $\varphi_i(I) = w_i^{1}$ and
\[s_i = p_i Y_i - h_i \Sigma_{G_i}\]
where
\[Y_i : = \left\{ \begin{array}{ll} I & \mbox{\; if \; } F_i \mbox{\; is totally real \;} \\ \overline{C} & \mbox{\; if \; } F_i \mbox{\; is totally complex \; } \end{array} \right.\]
and $h_i := \frac{p_i - 1}{e_{p_i}}$.

Since the discriminants of $\QQ(\eta_{p_i})$ are pairwise coprime, the discriminant of $F_{i} \subset \QQ(\eta_{p_i})$ are pairwise coprime. Hence, since $F$ is the compositum $F_{1} \dots F_{n}$ it follows from \cite[Theorem 4.26]{Na} that
\[\displaystyle F = F_{1} \dots F_{n} = F_{1} \otimes_{\QQ} \dots \otimes_{\QQ} F_{n},\]
and that
\[ \oo_F = \oo_{F_{1}} \dots \oo_{F_{n}} = \oo_{F_{1}} \otimes_{\ZZ} \dots \otimes_{\ZZ} \oo_{F_{n}} .\]

Thus, we can define $\varphi : \ZZ[\tilde{G}] \to \oo_F$ where $ \tilde{G} := G_1 \times \dots \times G_{n},$ such that the following diagram is commutative

\[ \xymatrix{
\ZZ[G_1] \otimes_{\ZZ} \dots \otimes_{\ZZ} \ZZ[G_{n}] \ar[rrr]^{\simeq} \ar[d]_{\bigotimes\limits_{i} \varphi_i } &  & &
\ZZ[\tilde{G}] \ar[d]^{\varphi} \\
\oo_{F_{1}} \otimes_{\ZZ} \dots  \otimes_{\ZZ} \oo_{F_{n}} \ar[rrr]^{\simeq}&  & &
\oo_{F}
}\]

It follows that $ \varphi :\ZZ[\tilde{G}] \to \oo_F$ satisfies
\[\varphi(I) = w_{1} \cdot ... \cdot  w_{n} := w.\]

\begin{remark}\label{LaINBDeF}
Notice that $\tilde{G} = \Gal(F/ \QQ)$ and that the action of $\tilde{G}$ on $w$ generates a normal integral basis for $\oo_F$.
\end{remark}
 Now, since the trace form on $\oo_F$ down to $\ZZ$ is the tensor product of the trace forms on $\oo_{F_i}$, we can extend $\varphi$ to the following quadratic spaces in the following way

\[\xymatrix{
\left<\ZZ[\tilde{G}], \otimes \beta_i \right> \ar[rr]^{\varphi} & & \left<\oo_F, \Tr_{F / \QQ} \right> \\
\bigotimes \left<\ZZ[G_i], \beta_i \right> \ar[u]^{\simeq} \ar[rr]_{\otimes \varphi_i} & & \bigotimes \left< \oo_{F_i}, \Tr_{F_i / \QQ} \right> \ar[u]_{\simeq}
}\]

The resulting circulant, $s = \overline{s} \in \ZZ[\tilde{G}]$, for the symmetric bilinear form $\beta:= \otimes \beta_i$ on $\ZZ[\tilde{G}]$ is $s_1 \cdot ... \cdot s_{n}$.\\

Finally, remember that $\chi': \tilde{G} \twoheadrightarrow \ZZ/q^r\ZZ$ has a kernel $H$, whose fixed field is $K$. Additionally, $\chi'$ induces a ring homomorphism

\[\chi' : \ZZ[\tilde{G}] \twoheadrightarrow \ZZ[\ZZ/q^r\ZZ],\]

with kernel equal to the annihilator ideal in $\ZZ[\tilde{G}]$ of $\Sigma_{H}$. Furthermore,

\[X \Sigma_{\tilde{G}} \to \chi'(X)\]

is a $\ZZ[\ZZ/q^r\ZZ]$-module isomorphism of $\left( \Sigma_{\tilde{G}} \right)$ with $\ZZ[\ZZ/q^r\ZZ]$ and the following diagram is commutative:

\[\xymatrix{
 & \left<\ZZ[\tilde{G}], \beta \right> \ar[rr]^{\varphi} \ar[dl]_{\chi'} & & \left<\oo_F, \Tr_{F / \QQ}( ) \right>  \\
\left<\ZZ[ \ZZ/q^r\ZZ], k \widetilde{\beta} \right>  \ar[r]^{\simeq} & \left<\left( \Sigma_{G'} \right),\beta \right> \ar@{^(->}[u] \ar[rr]^{\simeq} & & \left< \oo_K , k \Tr_{K / \QQ} \right> \ar@{^(->}[u]
}\]

The circulant associated to $\widetilde{\beta}$ is $\chi'(s) = \chi'(s_1)\cdot \chi'(s_2) \cdot ... \cdot \chi'(s_{n})$. Since $\chi' \restriction_{G_i}: G_{i} \simeq  \ZZ/e_{p_i}\ZZ $, by Lemma \ref{oneprime}, we have

 \[\chi'(s_i) = \chi'\left(p_i Y_i - h_i \Sigma_{G_i}\right) = p_i I - h_i \Sigma_{\langle e_{p_i} \rangle} \in \ZZ[\ZZ/q^r\ZZ], \quad \mbox{ if \; } q \mbox{\; is odd}\]

\[ \chi'(s_i) = \chi'\left(p_i Y_i - h_i \Sigma_{G_i}\right) = p_i Y'_i - h_i \Sigma_{\langle e_{p_i}\rangle} \in \ZZ[\ZZ/q^r\ZZ], \quad \mbox{ if \; } q \mbox{\; is even}\]

where $\langle e_{p_i}\rangle $ represents the only subgroup of $\ZZ/q^r\ZZ$ of order $e_{p_i}$, and
\[Y'_i :=  \left\{\begin{array}{ll} I & \mbox{ if } F_i \mbox{ is totally real} \\
\sigma & \mbox{ if } F_i \mbox{ is totally complex } \end{array}\right. \]
where $\sigma$ is the only element of order 2 in $\ZZ/q^r\ZZ$. Thus, if $K$ is totally complex then $\sigma$ is complex conjugation.

\begin{lemma}\label{coef} Let $K$ be a cyclic number field of degree $q^r$, $q$ odd prime. In keeping up with the notation above, let $\{p_{1},...p_{n}\}$ be the set of primes that ramify in $K$, all tame, let $e_{i}$ be the usual ramification index and let $h_i := \frac{p_i - 1}{e_{p_i}}$. Let $s$ and $\chi'$ be as above. Suppose $a_{0},...,a_{r}$ are integers such that \[ \chi'(s) =  \prod\limits_{i = 1}^{n} \left( p_i I - h_i \Sigma_{ \langle e_{p_i} \rangle } \right) = a_0 I + a_1 \Sigma_{\langle q \rangle} + a_2 \Sigma_{\langle q^2 \rangle} + \dots + a_r \Sigma_{\langle q^r \rangle}.\] For each  $1 \leq j \leq r$, let $\mathbb{P}_{j}(K):= \{ p_i : e_{p_i}(K/\QQ) = q^j \}$. If \footnote{as it is standard the product over the empty set is defined to be $1$.} 
 \[m_i :=\prod_{p \in \mathbb{P}_{i}(K)} p \ \mbox{and} \ f_{i} := \frac{m_i - 1}{q^{i}} \]
then,
\[a_0  =  p_1 \cdot p_2 \cdot ... \cdot p_{n}\]
and for $1 \leq i \leq r$
\[a_i  = - f_i \prod\limits_{j > i} m_j.\]

\end{lemma}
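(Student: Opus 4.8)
The plan is to exploit the special structure of the subgroup lattice of $\ZZ/q^r\ZZ$, which is a chain, to collapse the product defining $\chi'(s)$ into a manageable form. Write $\langle q^j\rangle$ for the unique subgroup of $\ZZ/q^r\ZZ$ of order $q^j$, so that $\langle q^0\rangle = \{I\} \subset \langle q^1\rangle \subset \dots \subset \langle q^r\rangle$. Because every ramification index $e_{p_i}$ is a power of $q$ (it divides the cyclic degree $q^r$ and is nontrivial), each factor of $\chi'(s)$ has the form $p_i I - h_i \Sigma_{\langle q^{c_i}\rangle}$ with $q^{c_i} = e_{p_i}$ and $1 \le c_i \le r$. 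The computation rests on the single multiplicative identity
\[ \Sigma_{\langle q^a\rangle}\,\Sigma_{\langle q^b\rangle} = q^{\min(a,b)}\,\Sigma_{\langle q^{\max(a,b)}\rangle}, \]
valid in $\ZZ[\ZZ/q^r\ZZ]$ because the two subgroups are nested; in particular $\Sigma_{\langle q^j\rangle}^2 = q^j\Sigma_{\langle q^j\rangle}$. I would also record that $\{I, \Sigma_{\langle q^1\rangle}, \dots, \Sigma_{\langle q^r\rangle}\}$ is $\ZZ$-linearly independent (comparing supports along the strictly increasing chain), so the coefficients $a_0, \dots, a_r$ are genuinely well defined once $\chi'(s)$ is written in this form.

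First I would group the factors of $\chi'(s)$ according to ramification index. Fixing $j$ and multiplying only the factors coming from primes $p_i \in \mathbb{P}_j(K)$ (all of which carry the same element $u := \Sigma_{\langle q^j\rangle}$, with $u^2 = q^j u$), I claim that
\[ \prod_{p_i \in \mathbb{P}_j(K)} \left( p_i I - h_i \Sigma_{\langle q^j\rangle} \right) = m_j I - f_j \Sigma_{\langle q^j\rangle}. \]
The coefficient of $I$ is clearly $\prod_{p_i\in\mathbb{P}_j(K)} p_i = m_j$. For the coefficient of $u$, expanding the product over subsets $T$ of the block and using $u^{|T|} = (q^j)^{|T|-1}u$ together with the crucial normalization $h_i q^j = p_i - 1$ turns the coefficient into $\tfrac{1}{q^j}\sum_{T\neq\emptyset}\big(\prod_{i\notin T}p_i\big)\prod_{i\in T}(1 - p_i)$; since $\sum_{\text{all }T}\big(\prod_{i\notin T}p_i\big)\prod_{i\in T}(1-p_i) = \prod_i\big(p_i + (1-p_i)\big) = 1$ and the empty set contributes $m_j$, the nonempty sum equals $1 - m_j$, giving coefficient $\tfrac{1-m_j}{q^j} = -f_j$. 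The empty-block case is consistent, since then $m_j = 1$, $f_j = 0$, and the empty product is $I$.

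With this reduction, $\chi'(s) = \prod_{j=1}^{r}\big(m_j I - f_j \Sigma_{\langle q^j\rangle}\big)$, a product indexed by the chain itself. Expanding over subsets $S\subseteq\{1,\dots,r\}$ (choosing $-f_j\Sigma_{\langle q^j\rangle}$ for $j\in S$ and $m_j I$ otherwise) and absorbing the nested $\Sigma$'s via the multiplicative identity shows that $S$ contributes to $\Sigma_{\langle q^\ell\rangle}$ exactly when $\max(S) = \ell$, with the power of $q$ equal to $\sum_{j\in S}j - \ell$. The $S = \emptyset$ term gives $a_0 = \prod_j m_j = \prod_i p_i$. For $\ell \ge 1$ I would write $S = \{\ell\}\cup S'$ with $S'\subseteq\{1,\dots,\ell-1\}$, factor out $-f_\ell$ and $\prod_{j>\ell}m_j$, and reduce to showing $\sum_{S'}\big(\prod_{j<\ell,\,j\notin S'}m_j\big)\prod_{j\in S'}(-f_j q^j) = 1$. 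This sum factors as $\prod_{j=1}^{\ell-1}(m_j - f_j q^j)$, and the second normalization identity $f_j q^j = m_j - 1$ makes every factor equal to $1$, yielding $a_\ell = -f_\ell\prod_{j>\ell}m_j$ as claimed.

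I expect the main obstacle to be the bookkeeping in the grouping step: verifying that collapsing each block $\mathbb{P}_j(K)$ produces precisely $m_j I - f_j\Sigma_{\langle q^j\rangle}$, since this is where the two normalizations $h_i q^j = p_i - 1$ and $f_j q^j = m_j - 1$ must be deployed to convert otherwise messy subset-sums into the telescoping products $\prod(p_i + (1-p_i)) = 1$ and $\prod(m_j - f_j q^j) = 1$. Once those two identities are in hand, both the block collapse and the final chain expansion are the same combinatorial mechanism applied twice, and the stated formulas for $a_0$ and $a_i$ follow directly.
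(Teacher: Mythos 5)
Your proposal is correct and follows essentially the same route as the paper: group the factors by ramification index so that the product collapses to $\prod_{j}\left(m_j I - f_j \Sigma_{\langle q^j\rangle}\right)$, then expand using $\Sigma_{\langle q^a\rangle}\Sigma_{\langle q^b\rangle} = q^{\min(a,b)}\Sigma_{\langle q^{\max(a,b)}\rangle}$ together with the normalizations $h_i q^j = p_i - 1$ and $m_j - q^j f_j = 1$. The only differences are cosmetic: the paper finishes by induction on the number of nonzero $f_j$ where you expand over subsets directly, and you spell out the block-collapse step that the paper asserts without proof.
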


\begin{proof} By reindexing, if necessary, we have

\begin{eqnarray*}
& & \prod_{i = 1}^{n} \left( p_i I - h_i \Sigma_{\langle e_{p_i} \rangle} \right) \\
& = & \underbrace{(p_1 I - h_1 \Sigma_{\langle q \rangle}) \dots (p_j I - h_j\Sigma_{\langle q \rangle})}_{e_{p_i} = q} \dots \underbrace{(p_l I - h_l\Sigma_{\langle q^r \rangle}) \dots (p_{n} I - h_{n}\Sigma_{\langle q^r \rangle})}_{e_{p_i} = q^r} \\
& = & \left(m_1 I - f_1 \Sigma_{\langle q \rangle} \right) \left(m_2 I - f_2 \Sigma_{\langle q^2 \rangle} \right)\ldots \left(m_r I - f_r \Sigma_{\langle q^r \rangle} \right).
\end{eqnarray*}
Using that 
\[\Sigma_{\langle q^j \rangle} \Sigma_{\langle q^{j+1} \rangle} = q^j\Sigma_{\langle q^{j+1} \rangle}\]
and that
\[m_j -q^jf_j = 1\]
 we finish the proof by induction on the number of $f_j \neq 0$.
\end{proof}

\begin{corollary}\label{ElCoroDeLaMatriz}

Let $K$ be as in Lemma \ref{coef}. For all $0 \leq i \leq r$ let $A_i$ be the  $q^r \times q^r$ matrix defined by
\[(A_i)_{l,m}:= \left\{ \begin{array}{ll} 1 & \mbox{ if } q^{r-i} | (m-l), \\
 & \\
0 & \ \ {\rm otherwise}.  \end{array} \right.\]

Then, there is an normal integral basis $\mathbb{e}$ of $\oo_K$ such that the Gram matrix of the trace form in such a basis is equal to \[ M = a_0 A_0 + a_1 A_1 + \cdots + a_r A_r.\]

\end{corollary}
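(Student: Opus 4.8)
The plan is to unwind the construction carried out immediately before the statement. The element $\chi'(s) = a_0 I + \sum_{i=1}^r a_i \Sigma_{\langle q^i\rangle}$ produced in Lemma \ref{coef} is precisely the circulant associated to the symmetric circulant $\widetilde{\beta}$ on $\ZZ[\ZZ/q^r\ZZ]$, and the last commutative diagram of the section realizes an isometry $\langle \ZZ[\ZZ/q^r\ZZ], k\widetilde{\beta}\rangle \cong \langle \oo_K, k\Tr_{K/\QQ}\rangle$ carrying the group-element basis $\{t^0, t^1, \dots, t^{q^r-1}\}$ of $\ZZ[\ZZ/q^r\ZZ]$ (where $\langle t\rangle = \ZZ/q^r\ZZ$) to a normal integral basis $\mathbb{e}$ of $\oo_K$. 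Since the common factor $k=[F:K]$ cancels in the isometry, the Gram matrix of $\Tr_{K/\QQ}$ in the basis $\mathbb{e}$ coincides with the Gram matrix of $\widetilde{\beta}$ in the group-element basis, so it suffices to compute the latter.

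First I would express the Gram matrix entries of a symmetric circulant directly in terms of its associated element. Using the identity ${\rm Pr}(\chi'(s) X \overline{Y}) = \widetilde{\beta}(X,Y)$ recorded in Section \ref{Notation}, for basis vectors $t^l$ and $t^m$ one has $\widetilde{\beta}(t^l, t^m) = {\rm Pr}(\chi'(s)\, t^{l-m})$, which is exactly the coefficient of $t^{m-l}$ in $\chi'(s)$. Thus the $(l,m)$ entry of the Gram matrix $M$ equals the coefficient of $t^{m-l}$ in $\chi'(s)$, and the symmetry of $M$ is automatic because $\chi'(s) = \overline{\chi'(s)}$.

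Next I would read off these coefficients from the expansion in Lemma \ref{coef}. The unique subgroup of $\ZZ/q^r\ZZ$ of order $q^i$ is the set of multiples of $q^{r-i}$, so the coefficient of $t^k$ in $\Sigma_{\langle q^i\rangle}$ equals $1$ when $q^{r-i}\mid k$ and $0$ otherwise; taking $k = m-l$ this is precisely $(A_i)_{l,m}$ (and for $i=0$ the condition $q^{r}\mid (m-l)$ forces $m=l$, so $A_0$ is the identity matrix, matching the term $a_0 I$). Substituting the expansion $\chi'(s) = a_0 I + \sum_{i=1}^r a_i \Sigma_{\langle q^i\rangle}$ then yields $(M)_{l,m} = \sum_{i=0}^r a_i (A_i)_{l,m}$, that is $M = a_0 A_0 + a_1 A_1 + \cdots + a_r A_r$, as claimed.

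The computation itself is short; the only real care needed is bookkeeping. I expect the main obstacle to be pinning down the normal integral basis precisely, namely verifying that the isometry in the final diagram sends the standard group basis to an \emph{integral} basis of $\oo_K$ and that the auxiliary factor $k$ genuinely cancels, so that the Gram matrix describes $\Tr_{K/\QQ}$ itself rather than a scalar multiple of it. Once the circulant-to-matrix dictionary (coefficient of $t^{m-l}$ $\leftrightarrow$ the $(l,m)$ entry) is set up correctly and the order-$q^i$ subgroup is identified with the multiples of $q^{r-i}$, the result follows immediately from Lemma \ref{coef}.
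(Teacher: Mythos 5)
Your proposal is correct and follows essentially the same route as the paper: the paper also takes the normal integral basis $\mathbb{e}_{j+1}=t^{j}(\Tr_{F/K}(w))$ coming from the final commutative diagram and reads the Gram entries $\Tr_{K/\QQ}(\mathbb{e}_{j+1}\mathbb{e}_{i+1})$ off the circulant $\chi'(s)$, obtaining $\sum_{k=r-l}^{r}a_k$ when $(i-j,q^r)=q^l$, which is exactly your statement that the $(l,m)$ entry is the coefficient of $t^{m-l}$ in $a_0I+\sum_i a_i\Sigma_{\langle q^i\rangle}$. Your identification of $\langle q^i\rangle$ with the multiples of $q^{r-i}$ and the cancellation of the scalar $k$ match the paper's (implicit) bookkeeping.
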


\begin{proof}
Let $t$ be a generator of $\Gal(K/\QQ)$, and 
let $w$ be as in Remark \ref{LaINBDeF}. If we let $\mathbb{e}_{1} : = \Tr_{F/K}(w)$, and $\mathbb{e}_{j + 1} := t^j(\mathbb{e}_1)$ for  $j =0,1,2,\dots,q^{r}-1$ then $\mathbb{e}:=\{\mathbb{e}_{1},...,\mathbb{e}_{q^{r}}\}$ is a normal integral basis. Furthermore, for this basis we have
\[\begin{array}{llc} \Tr_{K/ \QQ} (\mathbb{e}_{j +1})  & = ( -1)^{n} & \forall_j \\
 & \\
\Tr_{K / \QQ}(\mathbb{e}_{j + 1} \mathbb{e}_{i + 1}) & = \sum\limits_{k = r - l}^{r} a_k & \mbox{ if }  \ (i-j,q^r)= q^{l} \\
& \\
\Tr_{K / \QQ}(\mathbb{e}_{j + 1} \mathbb{e}_{i + 1}) & = a_r & \mbox{ if }  \ (i-j,q^r)= 1.
\end{array}\] from which the result follows.
\end{proof}

\begin{theorem}\label{main1}
Let $K, K'$ be two tame cyclic number fields of degree $q^r$, where $q$ is an odd prime.  Then, \[\left<\oo_K, \Tr_{K/ \QQ}  \right> \simeq \left<\oo_L, \Tr_{K'/ \QQ} \right> \ \mbox{if and only if} \ \mathfrak{d}(K) = \mathfrak{d}(K').\]

\end{theorem}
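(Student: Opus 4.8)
The plan is to reduce the ``if and only if'' to the explicit Gram matrix furnished by Corollary \ref{ElCoroDeLaMatriz}. The forward direction is immediate from the discussion in the introduction: isometric integral traces have equal discriminants (the discriminant is the determinant of any Gram matrix, which is an isometry invariant), so I would dispose of that direction in one line. The content is the converse, so I would assume $\mathfrak{d}(K) = \mathfrak{d}(K')$ and produce an isometry. By Corollary \ref{ElCoroDeLaMatriz} each of $K$ and $K'$ has a normal integral basis in which the Gram matrix of the trace is $M = a_0 A_0 + a_1 A_1 + \cdots + a_r A_r$, where the $A_i$ are the fixed universal matrices depending only on $q$ and $r$, and the coefficients $a_i$ are the integers computed in Lemma \ref{coef}. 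Since the $A_i$ are the same for both fields, it suffices to show that the tuples $(a_0, a_1, \dots, a_r)$ attached to $K$ and to $K'$ coincide; if they do, the identity map on coordinates is the desired isometry.

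Thus I would recast the problem as: \emph{the tuple $(a_0,\dots,a_r)$ is determined by $\mathfrak{d}(K)$.} The key observation is that $\mathfrak{d}(K)$ encodes exactly the data $(p_i, e_{p_i})$ of which primes ramify and with what index. Indeed, for a tame abelian field the conductor is $\mathrm{rad}(\mathfrak{d}(K))$ by Lemma \ref{conductor}, and the conductor--discriminant formula expresses $\mathfrak{d}(K)$ as a product of prime powers $\prod_i p_i^{\,q^r - q^r/e_{p_i}}$, so reading off the exponent of each prime $p_i$ dividing $\mathfrak{d}(K)$ recovers $e_{p_i}$. Equivalently, knowing $\mathfrak{d}(K)$ is the same as knowing the sets $\mathbb{P}_j(K) = \{p_i : e_{p_i} = q^j\}$ for all $j$. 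But the formulas of Lemma \ref{coef} express each $a_i$ purely in terms of these sets: with $m_j = \prod_{p \in \mathbb{P}_j(K)} p$ and $f_j = (m_j-1)/q^j$, one has $a_0 = \prod_i p_i = \prod_j m_j$ and $a_i = -f_i \prod_{j>i} m_j$ for $1 \le i \le r$. Hence $\mathfrak{d}(K) = \mathfrak{d}(K')$ forces $\mathbb{P}_j(K) = \mathbb{P}_j(K')$ for every $j$, hence equal $m_j$ and $f_j$, hence equal tuples $(a_0,\dots,a_r)$, and the two Gram matrices $M$ literally coincide.

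The main obstacle is the bookkeeping step of genuinely extracting the ramification data $(p_i, e_{p_i})$ from $\mathfrak{d}(K)$, i.e.\ justifying that the partition of ramified primes by ramification index is a discriminant invariant rather than merely a corollary of the earlier construction. In the tame cyclic prime-power-degree setting this is clean: each ramified prime $p_i$ contributes the single factor $p_i^{\,q^r - q^r/e_{p_i}}$, the exponents $q^r - q^r/e_{p_i}$ are distinct for distinct values of $e_{p_i} \in \{q, q^2, \dots, q^r\}$, and these are the only primes dividing $\mathfrak{d}(K)$ (the tame hypothesis rules out $q$ itself ramifying wildly, and $q \neq p_i$ throughout \S\ref{GenralPrimePower}). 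So the exponent with which each $p_i$ appears determines $e_{p_i}$ unambiguously, and the reduction goes through. I would therefore organize the proof as: (1) forward direction via discriminant invariance; (2) the conductor--discriminant computation recovering $\{\mathbb{P}_j(K)\}_j$ from $\mathfrak{d}(K)$; (3) invoke Lemma \ref{coef} and Corollary \ref{ElCoroDeLaMatriz} to conclude the Gram matrices agree.
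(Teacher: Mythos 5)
Your proposal is correct and follows essentially the same route as the paper: reduce to the canonical Gram matrix of Corollary \ref{ElCoroDeLaMatriz}, note via Lemma \ref{coef} that the coefficients $a_i$ depend only on the ramification data, and recover each $e_{p_i}$ from the exponent $q^r - q^r/e_{p_i}$ of $p_i$ in the discriminant (the paper cites the tame different computation for this last step, which is the same fact you invoke via the conductor--discriminant formula). No gaps.
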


\begin{proof}
We show the non trivial implication. Thanks to Corollary \ref{ElCoroDeLaMatriz} we know that $K$ and $K'$ have integral basis such that the Gram matrices of their traces, in their respective basis, are 
$ M = a_0 A_0 + a_1 A_1 + \cdots + a_r A_r $ and
$M' = a'_0 A_0 + a'_1 A_1 + \cdots + a'_r A_r$.
It suffices to show that for all $i$, $a_i = a'_i$. By Lemma \ref{coef} we see that the values $a_{i}$ and $a'_{i}$ are completely determined by the ramification indices of each ramified prime. Hence, it is enough to show that $e_p(K/ \QQ) = e_p(K' / \QQ)$ for all prime $p$. This is indeed the case since for a Galois number field $E$ of degree $n$ and discriminant $\mathfrak{d}(E)$; the exponent of $p$ in $\mathfrak{d}(E)$ is equal to $n(1-\frac{1}{e_p(E/ \QQ)})$ for every prime tame in $E$. Indeed, for all $\mathfrak{P} | p$ in $K$, $\mathfrak{P}^{(e_{p}-1)}$ exactly divides the different of $K/\QQ$, so that $p^{f_{p}g_{p}(e_{p}-1)}$ exactly divides its discriminant

\end{proof}

\begin{lemma}\label{coef2} Let $K$ be a tame cyclic number field of degree $2^r$. Let $\{p_1, p_2, \dots, p_n  \}$ be the set of primes ramifying in $K$, let $h_i:= \frac{p_i - 1}{e_{p_i}}$, where $e_{p_i}$ is the usual ramification index of $p_i$ in $K$ and let $\varepsilon$ be the number of $i$'s such that $e_i||(p_i-1)$. Let $s$ and $\chi'$ be as before. Then, there exist $a_0, \dots,a_r$ integers such that 
\[ \chi'(s) =  \prod\limits_{i = 1}^{n} \chi'(s_i) = a_0 \sigma^{\varepsilon} + a_1 \Sigma_{ \langle 2 \rangle} + a_2 \Sigma_{\langle 2^2 \rangle} + \dots + a_r \Sigma_{\langle 2^r \rangle}.\]
For each $1 \leq j \leq r$, let $\mathbb{P}_j:= \{ p_i : e_{p_i}(K/\QQ) = 2^j \}$. If

\[m_i := \prod_{p \in \mathbb{P}_{i}(K)} p \ \mbox{and} \ f_{i} := \frac{m_i - 1}{2^{i}}\]
then,
\[a_0  =  p_1 \cdot p_2 \cdot ... \cdot p_{n}\]
and for $1 \leq i \leq r$
\[a_i  = - f_i \prod\limits_{j > i} m_j.\]
\end{lemma}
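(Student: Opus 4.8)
The plan is to reduce the computation to the odd-degree case already settled in Lemma \ref{coef}, isolating the single new phenomenon caused by the factors $Y_i'$. Recall from the discussion preceding the lemma that for each ramified prime $\chi'(s_i) = p_i Y_i' - h_i \Sigma_{\langle e_{p_i}\rangle}$, where $Y_i' = I$ when $F_i$ is totally real and $Y_i' = \sigma$ when $F_i$ is totally complex, $\sigma$ being the unique element of order $2$ in $\ZZ/2^r\ZZ$. Since each $e_{p_i}$ is a power of $2$, the condition $e_i \| (p_i - 1)$ is exactly the statement that $h_i = (p_i-1)/e_{p_i}$ is odd, which by Lemma \ref{oneprime} is equivalent to $F_i$ being totally complex; hence $\varepsilon$ counts precisely the indices with $Y_i' = \sigma$, so that $\prod_i Y_i' = \sigma^\varepsilon$.

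The key algebraic observation I would establish first is that $\sigma \Sigma_{\langle 2^j\rangle} = \Sigma_{\langle 2^j\rangle}$ for every $j \geq 1$. Indeed, in the cyclic group $\ZZ/2^r\ZZ$ the element $\sigma$ lies in every nontrivial subgroup, in particular in the order-$2^j$ subgroup $\langle 2^j\rangle$; multiplication by $\sigma$ therefore permutes this subgroup setwise and fixes its sum. Because every ramified prime has $e_{p_i}\geq 2$, this absorption applies to all the $\Sigma$'s occurring in $\chi'(s)$.

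With this in hand I would expand the product $\prod_{i=1}^n \chi'(s_i)$ and split it into the single term in which every factor contributes $p_i Y_i'$ and the remaining terms, each contributing at least one factor $\Sigma_{\langle e_{p_i}\rangle}$. The first term equals $(p_1 \cdots p_n)\,\sigma^\varepsilon = a_0 \sigma^\varepsilon$. In each remaining term the product of the chosen $\Sigma$'s is, via $\Sigma_{\langle 2^a\rangle}\Sigma_{\langle 2^b\rangle} = 2^{\min(a,b)}\Sigma_{\langle 2^{\max(a,b)}\rangle}$, a multiple of a single $\Sigma_{\langle 2^m\rangle}$ with $m \geq 1$; by the absorption property all accompanying $Y_i'$ factors disappear, so the term coincides with the corresponding term of the \emph{odd-type} product $\prod_{i=1}^n (p_i I - h_i \Sigma_{\langle e_{p_i}\rangle})$. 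Consequently the sum of these remaining terms equals the non-identity part $a_1 \Sigma_{\langle 2\rangle} + \dots + a_r \Sigma_{\langle 2^r\rangle}$ produced in Lemma \ref{coef}, and the very same grouping of equal-ramification factors into $m_j I - f_j \Sigma_{\langle 2^j\rangle}$, together with the induction on the number of nonzero $f_j$ using $m_j - 2^j f_j = 1$, yields the stated formulas $a_0 = p_1 \cdots p_n$ and $a_i = -f_i \prod_{j>i} m_j$.

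The only genuinely new point beyond Lemma \ref{coef}, and hence the step to watch, is the bookkeeping of the $\sigma$-factors: one must verify that $\sigma$ survives only in the pure product term, giving the leading coefficient the shape $a_0 \sigma^\varepsilon$ rather than $a_0 I$, while being absorbed into every mixed term, so that the coefficients $a_1, \dots, a_r$ are unaffected and the whole computation collapses onto the odd case. Everything else is the purely formal $\Sigma$-algebra already carried out there, which never used the parity of $q$.
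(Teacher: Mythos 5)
Your proof is correct and follows essentially the same route as the paper's: group the factors by ramification index, use $\Sigma_{\langle 2^a\rangle}\Sigma_{\langle 2^b\rangle}=2^{\min(a,b)}\Sigma_{\langle 2^{\max(a,b)}\rangle}$ and $m_j-2^jf_j=1$, and induct on the number of nonzero $f_j$. Your explicit isolation of the absorption identity $\sigma\,\Sigma_{\langle 2^j\rangle}=\Sigma_{\langle 2^j\rangle}$ (which the paper uses only implicitly when it collapses each block to $m_j\sigma^{\varepsilon_j}-f_j\Sigma_{\langle 2^j\rangle}$) is a welcome clarification but not a different argument.
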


\begin{proof} By reindexing, if necessary, we have
{
\begin{eqnarray*}
& & \prod_{i = 1}^{n} \left( p_i Y'_i - h_i \Sigma_{\langle e_{p_i} \rangle} \right)\\
 & = & \underbrace{(p_1 Y'_1 - h_1 \Sigma_{\langle 2 \rangle}) \dots (p_j Y'_j - h_j\Sigma_{\langle 2 \rangle})}_{e_{p_i} = 2} \dots \underbrace{(p_l Y'_l - h_l\Sigma_{\langle 2^r \rangle}) \dots (p_{n} Y'_{n} - h_{n}\Sigma_{<2^r>})}_{e_{p_i} = 2^r} \\
& = & \left(m_1 \sigma^{\varepsilon_1} - f_1 \Sigma_{\langle 2 \rangle } \right) \left(m_2 \sigma^{\varepsilon_2} - f_2 \Sigma_{\langle 2^2 \rangle} \right)\ldots \left(m_r \sigma^{\varepsilon_r} - f_r \Sigma_{\langle 2^r \rangle} \right).
\end{eqnarray*}
}
Where for every $i$, $\varepsilon_i$ means the number of $i$'s  such that $e_{p_i} || (p_i -1) $ and $e_{p_i} = 2^i$. Using that \[ \varepsilon = \sum_{i=1}^{r} \varepsilon_i \] and that
\[ \Sigma_{\langle 2^j \rangle} \Sigma_{\langle 2^{j+1} \rangle} = 2^j \Sigma_{2^{j+1}} \]
the result follows by induction on the number of $f_j \neq 0$.
\end{proof}

\begin{corollary} Let $K$ as in Lemma \ref{coef2}. For every $0 \leq i \leq r$ let $A_i$ be the $2^r \times 2^r$ matrix defined by
\[ (A_i)_{l,m} :=  \left\{ \begin{array}{ll} 1 & \mbox{ if } 2^{r-i} | (m-l), \\
 & \\
0 & otherwise.  \end{array} \right.\]

Then, there is a normal integral basis $\mathbb{e}$ of $\oo_K$ such that the Gram matrix  of $\left< \oo_K, \Tr_{K / \QQ} \right>$ in such basis is equal to

\[ M = a_0 A_1^{\varepsilon} + a_1 A_1 + \cdots + a_r A_r. \]
\end{corollary}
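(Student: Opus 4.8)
The plan is to carry out verbatim the construction used in the proof of Corollary \ref{ElCoroDeLaMatriz}, the only genuinely new feature being the anomalous term $a_0\sigma^\varepsilon$ produced by Lemma \ref{coef2}. First I would fix a generator $t$ of $\Gal(K/\QQ)\cong\ZZ/2^r\ZZ$ together with the element $w$ of Remark \ref{LaINBDeF}, and set $\mathbb{e}_1:=\Tr_{F/K}(w)$ and $\mathbb{e}_{j+1}:=t^j(\mathbb{e}_1)$ for $j=0,\dots,2^r-1$. Exactly as in the odd case this yields a normal integral basis $\mathbb{e}=\{\mathbb{e}_1,\dots,\mathbb{e}_{2^r}\}$ of $\oo_K$, because the assignment $X\Sigma_{\tilde G}\mapsto\chi'(X)$ identifies $\left(\Sigma_{\tilde G}\right)$ with $\ZZ[\ZZ/2^r\ZZ]$ and the bottom isomorphism of the last commutative diagram carries the standard basis of $\ZZ[\ZZ/2^r\ZZ]$ onto $\mathbb{e}$.

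The next step is to record the dictionary between the circulant $\chi'(s)$ and the Gram matrix in the basis $\mathbb{e}$. Since the bottom row of that diagram is an isometry $\left<\ZZ[\ZZ/2^r\ZZ],k\widetilde\beta\right>\cong\left<\oo_K,k\Tr_{K/\QQ}\right>$, the factor $k$ cancels and the Gram matrix $M$ of $\Tr_{K/\QQ}$ in $\mathbb{e}$ coincides with the Gram matrix of $\widetilde\beta$ in the standard basis, whose $(l,m)$ entry is ${\rm Pr}\!\left(\chi'(s)\,t^{\,l-m}\right)$, i.e.\ the coefficient of $t^{\,m-l}$ in $\chi'(s)$. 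Reading this off the expression in Lemma \ref{coef2}, each summand $a_i\Sigma_{\langle 2^i\rangle}$ contributes $a_i$ to precisely those entries $(l,m)$ for which $t^{\,m-l}$ lies in the order-$2^i$ subgroup $\left<t^{2^{r-i}}\right>$, that is exactly when $2^{r-i}\mid(m-l)$. This is the defining condition of $A_i$, so $a_i\Sigma_{\langle 2^i\rangle}$ becomes $a_iA_i$, accounting for all terms with $1\le i\le r$.

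The remaining task, and the only real obstacle, is to translate the term $a_0\sigma^\varepsilon$. The key point is that $\sigma$ is the unique element of order two in $\ZZ/2^r\ZZ$, namely $t^{2^{r-1}}$, so $\sigma^\varepsilon$ depends only on the parity of $\varepsilon$: it equals $I$ when $\varepsilon$ is even and $\sigma$ when $\varepsilon$ is odd. When $\varepsilon$ is even every $F_i$ has $h_i$ even and $K$ is totally real, so the coefficient $a_0$ lands on the diagonal and contributes $a_0A_0=a_0 I$. When $\varepsilon$ is odd, $\sigma$ is complex conjugation and $K$ is totally complex, so $a_0$ lands on the entries with $m-l\equiv 2^{r-1}\pmod{2^r}$, i.e.\ on the off-diagonal conjugation block, exactly matching the totally complex case of Lemma \ref{oneprime}. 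In both cases $a_0\sigma^\varepsilon$ contributes $a_0$ times the permutation matrix of $\sigma^\varepsilon$, which is the matrix recorded as $a_0A_1^{\varepsilon}$ in the statement. Combining the three translations gives $M=a_0A_1^{\varepsilon}+a_1A_1+\cdots+a_rA_r$; the bookkeeping is otherwise identical to Corollary \ref{ElCoroDeLaMatriz}, so no further computation is required beyond verifying these correspondences.
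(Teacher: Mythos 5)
Your argument is correct and follows essentially the same route as the paper: the same normal integral basis $\mathbb{e}_{j+1}=t^{j}\bigl(\Tr_{F/K}(w)\bigr)$ and the same reading-off of the Gram entries from the coefficients of the circulant $\chi'(s)$ of Lemma \ref{coef2}; you merely make explicit the circulant-to-matrix dictionary and the translation of the $a_0\sigma^{\varepsilon}$ term (as $a_0$ times the permutation matrix of $\sigma^{\varepsilon}$), which the paper leaves implicit. One small inaccuracy: when $\varepsilon$ is even it is not true that every $h_i$ is even, only that an even number of them are odd; but since the argument only uses that $\sigma^{\varepsilon}=I$ in that case, the proof stands.
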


\begin{proof}
Let $t$ be a generator of $\Gal(K/\QQ)$, and 
let $w$ be as in Remark \ref{LaINBDeF}. If we let $\mathbb{e}_{1} : = \Tr_{F/K}(w)$, and $\mathbb{e}_{j + 1} := t^j(\mathbb{e}_1)$ for  $j =0,1,2,\dots,2^{r}-1$ then $\mathbb{e}:=\{\mathbb{e}_{1},...,\mathbb{e}_{2^{r}}\}$ is a normal integral basis. Furthermore, for this basis we have
\[ \begin{array}{llc} \Tr_{K/ \QQ} (\mathbb{e}_{j +1})  & = ( -1)^{n} & \forall_j \\
 & \\
\Tr_{K / \QQ}(\mathbb{e}_{j + 1} \mathbb{e}_{i + 1}) & = \sum\limits_{k = r - l}^{r} a_k & \mbox{ if }  \ (i-j,2^r)= 2^{l} \\
& \\
\Tr_{K / \QQ}(\mathbb{e}_{j + 1} \mathbb{e}_{i + 1}) & = a_r & \mbox{ if }  \ (i-j,2^r)= 1.
\end{array} \] from which the result follows.
\end{proof}

\begin{theorem} {\label{2tothen}}Let $K,K'$ be two tame cyclic number fields of degree $2^n$. Then,
\[\left< \oo_K , \Tr_{K/\QQ} \right> \simeq  \left< \oo_{K'}, \Tr_{K'/\QQ} \right> \ \mbox{if and only if}\  \mathfrak{d}(K) = \mathfrak{d}(K').\]

\end{theorem}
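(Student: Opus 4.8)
The plan is to follow the template of Theorem~\ref{main1}, reducing the isometry statement to an equality of explicit Gram matrices and then showing that every quantity entering those matrices is read off from the discriminant. I would prove only the nontrivial implication. Write the common degree as $2^r$, and suppose $\mathfrak{d}(K)=\mathfrak{d}(K')$. Choosing the normal integral bases furnished by the corollary that follows Lemma~\ref{coef2}, the two trace forms acquire Gram matrices
\[ M = a_0 A_1^{\varepsilon} + a_1 A_1 + \cdots + a_r A_r, \qquad M' = a'_0 A_1^{\varepsilon'} + a'_1 A_1 + \cdots + a'_r A_r, \]
where the matrices $A_i$ depend only on $r$ (both fields having degree $2^r$), while the coefficients $a_i,a'_i$ and the exponents $\varepsilon,\varepsilon'$ are the combinatorial invariants of Lemma~\ref{coef2}. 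It then suffices to prove that $a_i = a'_i$ for all $i$ and that $\varepsilon \equiv \varepsilon' \pmod 2$, for then $M = M'$ and the $\ZZ$-linear map carrying the basis of $\oo_K$ to that of $\oo_{K'}$ is the required isometry.

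The equality of coefficients is handled exactly as in the odd case. The exponent of a tame prime $p$ in the discriminant of a Galois field of degree $n$ equals $n(1-\tfrac{1}{e_p})$, so $\mathfrak{d}(K)=\mathfrak{d}(K')$ forces $K$ and $K'$ to ramify at the same primes with the same tame ramification indices $e_p$. By Lemma~\ref{coef2} each $a_i$ is a function of the products $m_j=\prod_{e_p=2^j}p$ and of $f_j=(m_j-1)/2^{\,j}$ only; since these are determined by the multiset of pairs $\{(p,e_p)\}$, we obtain $a_i=a'_i$ for every $i$.

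The genuinely new feature is the twisting term $A_1^{\varepsilon}$, equivalently the parity of $\varepsilon$, which records whether $K$ is totally real or totally complex; this is precisely the assertion, flagged in the introduction, that the signature is pinned down by the discriminant. The key observation is that, by the discussion preceding Lemma~\ref{coef2}, $\varepsilon$ counts exactly those ramified primes $p_i$ with $e_{p_i}\|(p_i-1)$, i.e. $v_2(e_{p_i})=v_2(p_i-1)$, equivalently those $F_i$ that are totally complex. This condition depends only on the pair $(p_i,e_{p_i})$, and we have just seen that the multiset of such pairs coincides for $K$ and $K'$; hence $\varepsilon=\varepsilon'$ on the nose (not merely modulo $2$). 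Consequently $M=M'$ and the proof is complete.

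I expect the only real obstacle to be the bookkeeping peculiar to $q=2$: one must verify that the element $\sigma^{\varepsilon}$ produced by Lemma~\ref{coef2} contributes a matrix governed solely by the parity of $\varepsilon$, and that this parity --- which does not intervene at all when $q$ is odd --- is nonetheless forced by the ramification data. Once $\varepsilon$ is recognized as a function of the pairs $(p_i,e_{p_i})$, the even case collapses to the same discriminant-determines-ramification principle used for odd $q$, and no new input is needed.
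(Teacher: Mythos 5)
Your proposal is correct and takes essentially the same route as the paper: the paper's proof also reduces to showing that $\mathfrak{d}(K)=\mathfrak{d}(K')$ forces $e_p(K/\QQ)=e_p(K'/\QQ)$ for all $p$ via the tame exponent formula, and then concludes that the associated circulants $\prod_i\bigl(p_iY_i'-h_i\Sigma_{\langle e_{p_i}\rangle}\bigr)$ coincide because every factor is a function of the pair $(p_i,e_{p_i})$. You merely phrase the conclusion at the level of the Gram matrices $M=a_0A_1^{\varepsilon}+\cdots+a_rA_r$ rather than the circulants, and your explicit verification that $\varepsilon$ (equivalently, which $F_i$ are totally complex, i.e.\ which $h_i$ are odd) is determined by the pairs $(p_i,e_{p_i})$ makes precise a point the paper's one-line ``must be equal'' leaves implicit.
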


\begin{proof}
We show the non trivial implication. By the same argument at the end of the proof of Theorem \ref{main1} we see that $e_p(K' / \QQ) = e_p(K/\QQ)$ for every prime $p$. Hence, the respective associated circulants

  \[ s(K / \QQ) = \prod_{i = 1}^{n} \left( p_i Y'_i - h_i \Sigma_{<e_{p_i}>} \right) \]
  and
  \[s(K' / \QQ) = \prod_{i = 1}^{n} \left( p_i Y'_i - h_i \Sigma_{\langle e_{p_i} \rangle} \right) \]
   must be equal, so $\left< \oo_K , \Tr_{K/\QQ}() \right> $ and  $\left< \oo_{K'}, \Tr_{K'/\QQ}() \right> $ are equivalent.

\end{proof}

\section{General degree}\label{General}
In this section, we study the behaviour of $\left< \oo_K , \Tr_{K /\QQ} \right>$  when $\Gal(K / \QQ)$ is cyclic of order $m$, arbitrary, and $K$ tame.
Let $m = q_1^{r_1} \cdot ... \cdot q_l^{r_l}$ be the prime decomposition of $m$. Since $\Gal(K / \QQ)$ is cyclic there exist $G_1, G_2, \dots, G_l \subset \Gal(K / \QQ)$ subgroups, such that $G_i \simeq \ZZ/q_i^{r_i}\ZZ$ and that
\[ \Gal(K / \QQ) = G_1 \cdot G_2 \cdot ... \cdot G_l.\]

By the Galois correspondence, there are fields $K_1, K_2, \dots, K_l \subset K$ such that \[ K = K_1 K_2 \dots K_l,\]
and $\Gal (K_i / \QQ) \simeq \ZZ/q_i^{r_i}\ZZ.$

\[ \xymatrix{ & & K = K_1 \cdots K_l & &  \\
K_1 \ar[rru]& \dots & K_i \ar[u] & \cdots & K_l \ar[llu] \\
& &  & & \\
& & \QQ \ar[uu]_{\ZZ/q_i^{r_i}\ZZ} \ar[rruu]_{\ZZ / q_n^{r_n} \ZZ} \ar[lluu]^{\ZZ / q_1^{r_1} \ZZ} \ar[ruu] \ar[luu] &  &}
\]

Furthermore, this decomposition is unique and only depends on $K$. Additionally, every $K_i$ is tame since $K$ is tame by hypothesis.\\

We denote $P_i \subset \{ p_1, p_2, \dots, p_{n} \}$ the set of primes ramifying in $K_i$. 
Since every $K_i$ is a tame cyclic number field of degree $q_i^{r_i}$, then we can proceed as in \S\ref{GenralPrimePower} in order to find number fields $F_{1}^i, \dots , F_{|P_i|}^i$ such that every $F_k^i$ is cyclic, tame, and only one prime is ramifying in $F_k^i$; additionally, we have
\[K_i \subset F_{1}^i \dots F_{|P_i|}^i.\]
We can continue this process with every $K_i$ in order to obtain numbers fields $F_{k}^i$, with $ 1 \leq i \leq l $ and $1 \leq k \leq |P_i|$, each one cyclic, tame, only one prime is ramifying, degree $[F_k^i:\QQ] = e_p(K_i/\QQ)$ and $F_k^i \subset \QQ(\eta_p)$ where $p$ is the only prime ramifying in $F_k^i$.\\

\begin{lemma}{\label{cycliclem}} Let $p_1, p_2, \dots, p_{n}$ be the primes ramifying in $K$, then there exist subfields $F_1 , F_2, \dots F_{n} \subset \QQ(\eta_f)$ such that for every $i$, $F_i \subset \QQ(\eta_{p_i})$, $K \subset F_1 F_2 \dots F_{n}$, and $[F_i : \QQ] = e_{p_i}(K / \QQ)$.
\end{lemma}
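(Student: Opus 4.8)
The plan is to build each $F_j$ as a compositum of the single-prime fields already attached to the cyclic factors $K_1,\dots,K_l$ of $K$. For a fixed $i$, the field $K_i$ is a tame cyclic number field of degree $q_i^{r_i}$, so the analysis of \S\ref{GenralPrimePower} applies verbatim: for every prime $p$ ramifying in $K_i$ there is a tame cyclic field $F_p^{\,i}\subset\QQ(\eta_p)$ in which $p$ is the only ramified prime and $[F_p^{\,i}:\QQ]=e_p(K_i/\QQ)$, a power of $q_i$, with $K_i\subset\prod_{p\in P_i}F_p^{\,i}$. For each prime $p_j$ ramifying in $K$ I would then set
\[ F_j := \prod_{i\,:\,p_j\in P_i} F_{p_j}^{\,i}, \]
the compositum running over those $i$ for which $p_j$ ramifies in $K_i$. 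Every factor lies in $\QQ(\eta_{p_j})\subset\QQ(\eta_f)$ (using $f={\rm rad}(\mathfrak{d}(K))$ from Lemma \ref{conductor}), so $F_j\subset\QQ(\eta_{p_j})$ is immediate.

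Next I would compute $[F_j:\QQ]$. Since $\Gal(\QQ(\eta_{p_j})/\QQ)\simeq(\ZZ/p_j\ZZ)^*$ is cyclic, its subfields are determined by their degree, and the factors $F_{p_j}^{\,i}$ have pairwise coprime degrees $e_{p_j}(K_i/\QQ)$, being powers of the distinct primes $q_i$. In a cyclic group the subgroups cut out by these subfields intersect in the subgroup whose order is the gcd of their orders, so the compositum corresponds to that intersection and hence has degree $\prod_i e_{p_j}(K_i/\QQ)$.

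The crux is to identify this product with $e_{p_j}(K/\QQ)$, and I expect this multiplicativity of ramification indices across the coprime factors to be the main (though routine) technical point. Here I would use the decomposition $\Gal(K/\QQ)=G_1\cdots G_l$ with $G_i\simeq\ZZ/q_i^{r_i}\ZZ$: the restriction map $\Gal(K/\QQ)\to\Gal(K_i/\QQ)$ is exactly the $i$-th projection $\pi_i$ (since $K_i$ is the fixed field of $\prod_{j\neq i}G_j$), and it carries the ramification group $\mathcal{V}_{p_j}(K/\QQ)$ onto $\mathcal{V}_{p_j}(K_i/\QQ)$. Because $\Gal(K/\QQ)$ is the internal direct product of its $q_i$-primary components $G_i$, any subgroup $H$ is the direct product of its $q_i$-parts $H_{q_i}=H\cap G_i$, and $\pi_i$ restricts to an isomorphism $H_{q_i}\simeq\pi_i(H)$ while killing $H_{q_k}$ for $k\neq i$; hence $|H|=\prod_i|\pi_i(H)|$. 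Applying this to $H=\mathcal{V}_{p_j}(K/\QQ)$ gives
\[ e_{p_j}(K/\QQ)=\bigl|\mathcal{V}_{p_j}(K/\QQ)\bigr|=\prod_i\bigl|\mathcal{V}_{p_j}(K_i/\QQ)\bigr|=\prod_i e_{p_j}(K_i/\QQ), \]
which matches the degree computed above, so $[F_j:\QQ]=e_{p_j}(K/\QQ)$.

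Finally the inclusion $K\subset F_1\cdots F_n$ follows formally: each $F_{p_j}^{\,i}$ sits inside $F_j$ by construction, so for every $i$ one has $K_i\subset\prod_{p_j\in P_i}F_{p_j}^{\,i}\subset\prod_{p_j\in P_i}F_j\subset F_1\cdots F_n$, and therefore $K=K_1\cdots K_l\subset F_1\cdots F_n$. This completes the plan; the only place requiring genuine care is the coprime-order bookkeeping that simultaneously yields the correct compositum degree inside $\QQ(\eta_{p_j})$ and the multiplicativity of the ramification indices.
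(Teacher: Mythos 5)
Your proposal follows essentially the same route as the paper's proof: define each $F_j$ as the compositum of the single-prime fields $F_{p_j}^{\,i}$ attached to the cyclic factors $K_i$, observe that each lies in $\QQ(\eta_{p_j})$ and that the total compositum contains $K=K_1\cdots K_l$, and use the coprimality of the degrees $e_{p_j}(K_i/\QQ)$ (powers of the distinct primes $q_i$) to compute $[F_j:\QQ]$ as their product. The only difference is that you spell out the multiplicativity $e_{p_j}(K/\QQ)=\prod_i e_{p_j}(K_i/\QQ)$ via the primary decomposition of the ramification group, a step the paper asserts without further justification.
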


\begin{proof}

Based on the construction above, we have number fields $F_{k}^i$, with $ 1 \leq i \leq l $ and $1 \leq k \leq |P_i|$. For every prime $p_j$, we associate the  $F_k^i$, $1 \leq i \leq l, 1 \leq k \leq |P_i|$, according the prime which is ramifying in each one and denote by $F_j$ its composition. Let us check that this construction has the properties that we claimed. Clearly, every $F_j \subset \QQ(\eta_{p_j})$ since by construction the only prime ramifying in $F_j$ is $p_j$, additionally,
\[F_1 F_2 \dots F_{n} = \prod_{i,k}F_k^i \supset K_1 \dots K_l = K.\]
Now, suppose that $p_j$ ramifies only in $K_i$, then $e_{p_j}(K / \QQ) = e_{p_j}(K_i/ \QQ) = [F_i : \QQ]$.\\
On the other hand, if $p_j$ ramifies in $\displaystyle \{K_{j_1}, K_{j_2}, \dots, K_{j_s} \} $ then the respective number field $F_k^{j_l}$ in which $p_j$ 
ramifies has degree $\displaystyle e_{p_j}(K_{j_l} / \QQ)$, since every $K_i$ has degree $q_i^{r_i}$ then the numbers $\displaystyle e_{p_j}(K_{j_l}/ \QQ)$
are mutually  coprimes, then the degree of $F_j$, being the composite of the number field $F_k^{j_l}$ in which $p_j$ is ramifying, is the product of these degrees, but this product is the ramification index $e_{p_j}(K/ \QQ)$.
\end{proof}

From Lemmas \ref{conductor} and \ref{cycliclem}, we have the following diagram

\[ \xymatrix{ & & \QQ(\eta_f) & &\\
& & F:= F_1 \dots F_{n} \ar[u] & & \\
& & K = K_1 \dots K_l \ar[u] & &  \\
K_1 \ar[rru]&  \cdots & K_i \ar[u] & \cdots & K_l \ar[llu] \\
& &  & & \\
& & \QQ \ar[uu]_{\ZZ/q_i^{r_i}\ZZ} \ar[rruu]_{\ZZ / q_l^{r_l} \ZZ} \ar[lluu]^{\ZZ / q_1^{r_1} \ZZ} \ar[ruu] \ar[luu] &  &}
\]

As before, we denote by $\chi$ the canonical projection from $\Gal(\QQ(\eta_f) / \QQ)$ to $\Gal(K / \QQ)$,
\[ \chi : (\ZZ/f\ZZ)^* \to \ZZ/m\ZZ \]
\[\chi(\sigma) = \sigma \restriction_{K}.\]
For every $i$ the ramification group of $p_i$ in $\QQ(\eta_f)$ is
\[\mathcal{V}_{p_i}(\QQ(\eta_f)/ \QQ) = \{id\}\times \dots \times (\ZZ/p_i\ZZ)^* \times \dots \times \{id \},\]
and the restriction of $\chi$ to $\mathcal{V}_{p_i}(\QQ(\eta_f) / \QQ)$ is onto to the ramification group of $K/ \QQ$

\[\chi \restriction{V_{p_i}(\QQ(\eta)/ \QQ)} : V_{p_i}(\QQ(\eta)/ \QQ) \twoheadrightarrow \mathcal{V}_{p_i}(K / \QQ).\]

Now, by Lemma \ref{cycliclem} there exist a subfield $F := F_1 \dots F_{n}$ such that $\QQ \subset  K \subset F \subset \QQ(\eta_f)$, therefore $\chi$ factors trough $\Gal(F/ \QQ)$ into
\[\chi': \Gal(F / \QQ) \to \ZZ/m\ZZ.\]

Our main interest at this point is to understand the behaviour of $\left<\oo_F, \Tr_{F / \QQ} \right>$. Since $F = F_1 F_2 \dots F_{n}$ and for every $i$, $\mathfrak{d}(F_i)$ is a power of $p_i$, then $\mathfrak{d}(F_i)$ is mutually coprime to $\mathfrak{d}(F_j)$ for $j \neq i$, therefore
\[\left< o_F, \Tr_{F / \QQ} \right> = \bigotimes_{i=1}^{n} \left< o_{F_i}, \Tr_{F_i / \QQ} \right>.\]

Thus, it's enough to understand the behaviour of each $\left< o_{F_i}, \Tr_{F_i / \QQ} \right> $ in order to complete our task.

\subsection{Odd degree}

Let us assume that the degree of $K$,  $m =q_1^{r_1} \dots q_l^{r_l}$ is an odd number. For $1 \leq i \leq l$ let $s_i$ be the circulant associated to $F_i$. Since $F_i$ has odd degree, we get
\[ s_i = p_i I - h_i \Sigma_{G^i}  \]
where, $\displaystyle h_i := \frac{p_i - 1}{[F_i:\QQ]} =\frac{{p_i} - 1}{e_{q_i}(K / \QQ)} $  and $\displaystyle G^i := \Gal(F_i / \QQ) \simeq \ZZ/ e_{q_i}(K/\QQ)\ZZ$. \\

Therefore, the circulant $s$ associated to $F$ is

\begin{eqnarray*}
  s &=& s_1 \cdot s_2 \cdot ... \cdot s_{n} \\
   &=& \prod_{i=1}^{n} \left(p_i I - h_i \Sigma_{G^i}.\right)
\end{eqnarray*}

Now, since $\chi': \Gal(F / \QQ) \to \Gal(K/ \QQ)$ is onto, then the circulant associated to $K$ is
\begin{eqnarray*}
 \chi'(s) & = & \prod_{i=1}^{n} \chi'(s_i)\\
& = & \prod_{i=1}^{n} \chi'\left(p_i I - h_i \Sigma_{G^i} \right)\\
& = & \prod_{i=1}^{n} \left( p_i I -h_i \Sigma_{\langle e_{p_i} \rangle} \right).
\end{eqnarray*}
Where $\left< e_{p_i} \right>$ denotes the unique subgroup of $\ZZ / m\ZZ$ of order $e_{p_i}(K / \QQ)$.\\

\begin{theorem}{\label{odd}}

Let $K, K'$ be two tame cyclic number fields of odd degree $m$. Then, \[ \left<\oo_K, \Tr_{K/ \QQ}  \right> \simeq \left<\oo_{K'}, \Tr_{K'/ \QQ} \right> \ 
\mbox{if and only if} \ \mathfrak{d}(K) = \mathfrak{d}(K').\]
\end{theorem}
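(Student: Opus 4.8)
The plan is to follow verbatim the template of the proof of Theorem \ref{main1}, now feeding in the description of the circulant $\chi'(s)$ assembled in this subsection. As always, only the implication $\mathfrak{d}(K)=\mathfrak{d}(K') \Rightarrow \langle\oo_K,\Tr_{K/\QQ}\rangle \simeq \langle\oo_{K'},\Tr_{K'/\QQ}\rangle$ needs an argument, the converse being the standard fact that the integral trace refines the discriminant.

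First I would translate the hypothesis $\mathfrak{d}(K)=\mathfrak{d}(K')$ into an equality of ramification data. Since a rational prime ramifies exactly when it divides the discriminant, $K$ and $K'$ ramify at the same finite set $\{p_1,\dots,p_n\}$. Moreover, exactly as at the end of the proof of Theorem \ref{main1}, for a tame prime $p$ in a Galois field $E$ of degree $m$ the exponent of $p$ in $\mathfrak{d}(E)$ equals $m\bigl(1-\tfrac{1}{e_p(E/\QQ)}\bigr)$, because $\mathfrak{P}^{e_p-1}$ exactly divides the different and $e_p f_p g_p=m$. Both fields have the same degree $m$, and $e\mapsto m\bigl(1-\tfrac{1}{e}\bigr)$ is strictly increasing, hence injective; so the equality of the $p$-adic valuations of $\mathfrak{d}(K)$ and $\mathfrak{d}(K')$ forces $e_{p}(K/\QQ)=e_{p}(K'/\QQ)$ for every $p$.

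Next I would insert this into the circulant formula obtained just above the statement. For odd degree the circulant associated to $K$ is $\chi'(s)=\prod_{i=1}^{n}\bigl(p_i I - h_i \Sigma_{\langle e_{p_i}\rangle}\bigr)$, with $h_i=(p_i-1)/e_{p_i}$ and $\langle e_{p_i}\rangle$ the subgroup of $\ZZ/m\ZZ$ of order $e_{p_i}$. The decisive point, and the only place where the cyclicity of $\Gal(K/\QQ)$ enters, is that in a cyclic group there is a \emph{unique} subgroup of each order, so $\Sigma_{\langle e_{p_i}\rangle}\in\ZZ[\ZZ/m\ZZ]$ is pinned down by the integer $e_{p_i}$ alone. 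Thus $\chi'(s)$ is an element of $\ZZ[\ZZ/m\ZZ]$ depending only on the list of pairs $(p_i,e_{p_i})$. By the previous paragraph these lists agree for $K$ and $K'$, so the two circulants are not merely congruent but literally equal, $\chi'(s_K)=\chi'(s_{K'})$.

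Finally I would conclude via the identification built in this section: $\chi'(s)$ is the circulant associated to $\Tr_{K/\QQ}$ relative to the normal integral basis produced from $w$ through $\varphi$ and $\chi'$, and by the discussion in \S\ref{Notation} congruent circulants correspond to isometric forms. Equal circulants are in particular congruent (take $V=1$), so $\chi'(s_K)=\chi'(s_{K'})$ yields $\langle\oo_K,\Tr_{K/\QQ}\rangle\simeq\langle\oo_{K'},\Tr_{K'/\QQ}\rangle$. I expect the only genuinely delicate point to be the bookkeeping already carried out before the statement: verifying that the tensor-product gluing $\oo_F=\oo_{F_1}\otimes\cdots\otimes\oo_{F_n}$ and the passage through $\chi'$ realize $\chi'(s)$ as the Gram data of the honest trace form in an actual normal integral basis of $\oo_K$, rather than of some rescaling. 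Since the commutative diagrams preceding the theorem furnish precisely this, no new idea beyond the prime-power case is required, and the theorem follows as a short formal consequence.
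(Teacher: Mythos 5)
Your proposal is correct and follows essentially the same route as the paper: deduce $e_p(K/\QQ)=e_p(K'/\QQ)$ from the tame discriminant-exponent formula, observe that the circulant $\prod_i\bigl(p_i I - h_i \Sigma_{\langle e_{p_i}\rangle}\bigr)$ is then literally the same element of $\ZZ[\ZZ/m\ZZ]$ for both fields, and conclude isometry via the circulant--form correspondence. The paper's own proof is just a terser version of this, and your added care about the uniqueness of subgroups in a cyclic group and about the rescaling in the gluing diagrams only makes the argument more complete.
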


\begin{proof}
We show the non trivial implication. As usual the hypotheses imply that $e_p(K / \QQ) = e_p(K' / \QQ)$ for every prime $p$. Therefore,  the respective associated circulants

\[s(K) = \prod_{i=1}^{n} \left( p_i I -h_i \Sigma_{\langle e_{p_i} \rangle} \right)  = s(K')\]

are equal and therefore the quadratics modules $\left<\oo_{K}, \Tr_{K / \QQ} \right>$ and $\left<\oo_{K'}, \Tr_{K' / \QQ} \right>$ are isometric.
\end{proof}

We now describe the circulant $s$ associated to a cyclic tame number field $K$ of odd degree $m$. Let $1 = d_1 < d_2 < \dots < d_{\tau(m)} = m$ be the set of positive divisors of $m$, and let
\[ P := \left\{ \left(d_2^{\varepsilon_2}, d_3^{\varepsilon_3}, \dots , d_{\tau(m)}^{\varepsilon_{\tau(m)}} \right) \in \ZZ^{\tau(m)-1} : \varepsilon_i = 0 \mbox{ or } 1 \mbox{ for every } i  \right\}.\]

Additionally, for every $\vec{v} \in P$ we define
\[ \lcm(\vec{v}) := \lcm\left[ d_2^{\varepsilon_2}, d_3^{\varepsilon_3}, \dots , d_{\tau(m)}^{\varepsilon_{\tau(m)}}\right],\]
\[ \gcd(\vec{v}) := \gcd\left(d_2^{\varepsilon_2}, d_3^{\varepsilon_3}, \dots , d_{\tau(m)}^{\varepsilon_{\tau(m)}}\right),\]
and for every $d|m$
\[ P_d := \{ \vec{v} \in P : \lcm(\vec{v}) = d \}. \]
With this notation in mind we state the following Lemma:

\begin{lemma}{\label{coef3}}
Let $K$ be a tame cyclic number field $K$ of odd degree $m$ and let $p_1, p_2, \dots, p_{n}$ be the primes ramifying in $K$. Then, the circulant $s$ associated to $K$ is given by

\begin{eqnarray*}
  s(K) &=& \prod_{i=1}^{n} \left( p_i I -h_i \Sigma_{\langle e_{p_i} \rangle} \right) \\
    &=& \sum_{d | m} a_d \Sigma_{\langle d \rangle}
\end{eqnarray*}
where $\langle d \rangle$ denotes the only subgroup of $\ZZ/ m\ZZ$ of order $d$, and $a_i$ is given by the formula 

\[ \left\{ \begin{array}{rl} a_1 & =  p_1 \cdot p_2 \cdot ... \cdot p_{n} \\   & \vdots \\
a_d & = \sum\limits_{\vec{v} \in P_d} \gcd(\vec{v}) \prod\limits_{\varepsilon_i = 0} w_{d_i} \prod\limits_{\varepsilon_j = 1}(- f_{d_j})    \end{array} \right. \]
in which $d >1$ is a divisor of $m$, and for $\mathbb{P}_d:= \{ p_i : e_{p_i}(K/\QQ) = d \}$, 

\[w_{d}:=\prod_{p \in \mathbb{P}_i}p \ \mbox{and} \ f_d := \frac{w_d - 1}{d}. \]

\end{lemma}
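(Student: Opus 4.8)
The plan is to collapse the given product into one indexed by the divisors of $m$, expand it, and read off the coefficients. First I would group the factors of $s(K)=\prod_{i=1}^n\bigl(p_iI-h_i\Sigma_{\langle e_{p_i}\rangle}\bigr)$ according to ramification index. Since $m$ is odd, every $F_i$ and hence $K$ is totally real, so no complex-conjugation twist $\sigma$ appears and each factor has the clean shape $p_iI-h_i\Sigma_{\langle e_{p_i}\rangle}$. For a fixed divisor $d\mid m$ with $d>1$, the primes with $e_{p_i}=d$ are exactly those in $\mathbb{P}_d(K)$, and multiplying their factors together — using $\Sigma_{\langle d\rangle}^2=d\,\Sigma_{\langle d\rangle}$ together with the arithmetic identity $w_d-d f_d=1$, exactly as in the induction of Lemma \ref{coef} — collapses them into the single factor $w_dI-f_d\Sigma_{\langle d\rangle}$. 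Divisors $d$ that occur as no ramification index contribute an empty product, i.e. the factor $I$ (consistently, $w_d=1$ and $f_d=0$). Hence $s(K)=\prod_{d\mid m,\,d>1}\bigl(w_dI-f_d\Sigma_{\langle d\rangle}\bigr)$.

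Next I would expand this product. Choosing, for each nontrivial divisor $d_i$, either the term $w_{d_i}I$ or the term $-f_{d_i}\Sigma_{\langle d_i\rangle}$ is precisely the data of a vector $\vec v\in P$, where $\varepsilon_i=1$ records the choice of the $\Sigma$-term. The key computation is the behaviour of a product of several $\Sigma_{\langle d\rangle}$'s. From the general identity $\Sigma_H\Sigma_{H'}=|H\cap H'|\,\Sigma_{HH'}$ for subgroups of a finite abelian group, and the fact that in $\ZZ/m\ZZ$ we have $H\cap H'=\langle\gcd(d,d')\rangle$ and $HH'=\langle\lcm(d,d')\rangle$ when $H=\langle d\rangle$ and $H'=\langle d'\rangle$, one obtains $\Sigma_{\langle d\rangle}\Sigma_{\langle d'\rangle}=\gcd(d,d')\,\Sigma_{\langle\lcm(d,d')\rangle}$. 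Iterating this over the indices $j$ with $\varepsilon_j=1$ shows $\prod_{\varepsilon_j=1}\Sigma_{\langle d_j\rangle}=\gcd(\vec v)\,\Sigma_{\langle\lcm(\vec v)\rangle}$; the telescoping scalar can be written as $\bigl(\prod_{\varepsilon_j=1}d_j\bigr)/\lcm(\vec v)$, which is the quantity denoted $\gcd(\vec v)$.

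Putting these together, the term of the expansion attached to $\vec v$ equals $\gcd(\vec v)\,\bigl(\prod_{\varepsilon_i=0}w_{d_i}\bigr)\bigl(\prod_{\varepsilon_j=1}(-f_{d_j})\bigr)\Sigma_{\langle\lcm(\vec v)\rangle}$, and collecting all $\vec v$ sharing a common value $\lcm(\vec v)=d$ — that is, summing over $\vec v\in P_d$ — yields the stated formula for $a_d$. The coefficient $a_1$ comes solely from the vector with all $\varepsilon_i=0$: there the $\Sigma$-product is empty (giving $\Sigma_{\langle 1\rangle}=I$) and the scalar is $\prod_{d>1}w_d=\prod_{p\in\{p_1,\dots,p_n\}}p=p_1\cdots p_n$, since each ramified prime lies in exactly one set $\mathbb{P}_d$. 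I expect the main obstacle to be the careful bookkeeping of the coefficient $\gcd(\vec v)$: one must check that the iterated use of $\Sigma_{\langle a\rangle}\Sigma_{\langle b\rangle}=\gcd(a,b)\,\Sigma_{\langle\lcm(a,b)\rangle}$ is order-independent and that the resulting scalar is correctly identified with $\gcd(\vec v)$, the remainder being a routine induction on the number of nonzero $f_{d_j}$ in the spirit of Lemma \ref{coef}.
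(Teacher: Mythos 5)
Your proposal follows the same route as the paper's (very terse) proof: group the factors by ramification index, collapse each group into $w_dI-f_d\Sigma_{\langle d\rangle}$ via $\Sigma_{\langle d\rangle}^2=d\,\Sigma_{\langle d\rangle}$ and $w_d-df_d=1$, then expand $\prod_{d\mid m}\bigl(w_dI-f_d\Sigma_{\langle d\rangle}\bigr)$ using $\Sigma_{\langle a\rangle}\Sigma_{\langle b\rangle}=\gcd(a,b)\,\Sigma_{\langle\lcm[a,b]\rangle}$ and collect terms by $\lcm(\vec v)$. The collapse step and the order-independence argument via $|H|\,|H'|=|H\cap H'|\,|HH'|$ are both correct.

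There is, however, a genuine gap at the final identification. You correctly compute that the scalar produced by $\prod_{\varepsilon_j=1}\Sigma_{\langle d_j\rangle}$ is $\bigl(\prod_{\varepsilon_j=1}d_j\bigr)/\lcm(\vec v)$, but you then assert that this number \emph{is} $\gcd(\vec v)$. The identity $\gcd\cdot\lcm=\text{product}$ holds for two arguments only; for three or more it fails. Concretely, take $m=45$ and $\vec v$ selecting the divisors $3,9,15$: iterating the product rule gives $\Sigma_{\langle 3\rangle}\Sigma_{\langle 9\rangle}\Sigma_{\langle 15\rangle}=3\cdot 3\,\Sigma_{\langle 45\rangle}=9\,\Sigma_{\langle 45\rangle}$ (consistent with $3\cdot 9\cdot 15/45=9$), whereas $\gcd(3,9,15)=3$. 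So the term your expansion attaches to such a $\vec v$ carries the scalar $\bigl(\prod_{\varepsilon_j=1}d_j\bigr)/d$, not $\gcd(\vec v)$, and the two differ as soon as three or more distinct ramification indices contribute to the same class $P_d$. As written, your argument therefore does not yield the displayed formula for $a_d$; it yields the formula with $\gcd(\vec v)$ replaced by $\bigl(\prod_{\varepsilon_j=1}d_j\bigr)/d$. You should either justify why only the two-divisor case can occur (it cannot in general), or record that the coefficient in the statement should be read as this quotient --- which also indicates that the lemma's literal formula, and the notation $\gcd(\vec v)$, need to be corrected rather than merely re-derived.
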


\begin{proof} Let $1= d_1 < d_2 < \dots < d_{\tau(m)} = m$ be the divisors of $m$. Then by reindexing, if necessary, we have

\begin{eqnarray*}
& & \prod_{i = 1}^{n} \left( p_i I - h_i \Sigma_{\langle e_{p_i} \rangle} \right) \\
 & = & \underbrace{(p_1 I - h_1 \Sigma_{\langle d_2 \rangle}) \dots (p_j I - h_j\Sigma_{\langle d_2 \rangle})}_{e_{p_i} = d_2} \dots \underbrace{(p_l I - h_l\Sigma_{\langle m \rangle}) \dots (p_{n} I - h_{n}\Sigma_{\langle m \rangle})}_{e_{p_i} = m} \\
& = &  \left(w_{d_2} I - f_{d_2} \Sigma_{\langle d_2 \rangle} \right)\ldots \left(w_m I - f_m \Sigma_{\langle m \rangle } \right).
\end{eqnarray*}

Using the fact that

\[\Sigma_{\langle d_1 \rangle} \Sigma_{\langle d_2 \rangle} = \gcd(d_1,d_2) \Sigma_{\langle \lcm[d_1,d_2] \rangle} \]
we finish by induction on the number of $f_j \neq 0$.\\
\end{proof}

\begin{corollary} Let $K$ as in Lemma \ref{coef3}. For every $d | m$, $d \geq 1$ let $A_d$ be the $m \times m $ matrix defined by
\[(A_d)_{i,j} :=  \left\{ \begin{array}{ll} 1 & \mbox{ if } \frac{m}{d} \big| (i-j) \\
& \\
0 & otherwise  \end{array} \right.\]

Then, there is a normal integral basis $\mathbb{e}:=\{\mathbb{e}_{1},...,\mathbb{e}_{m}\}$ of $\oo_K$  such that the Gram matrix of $\left< \oo_K, \Tr_{K \QQ}() \right>$ in such basis is

\[ M = \sum_{d | m} a_d A_d. \]
\end{corollary}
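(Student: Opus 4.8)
The plan is to transport the description of the circulant furnished by Lemma \ref{coef3} into an explicit Gram matrix, following verbatim the strategy of the proof of Corollary \ref{ElCoroDeLaMatriz} but now with the arbitrary odd modulus $m$ in place of a prime power $q^r$. First I would fix a generator $t$ of $\Gal(K/\QQ)\simeq\ZZ/m\ZZ$ and let $w$ be the generator of the normal integral basis of $\oo_F$ produced by the gluing construction of \S\ref{General} (Remark \ref{LaINBDeF}). Setting $\mathbb{e}_1:=\Tr_{F/K}(w)$ and $\mathbb{e}_{j+1}:=t^j(\mathbb{e}_1)$ for $j=0,1,\dots,m-1$, the same descent used in Lemma \ref{oneprime} and Corollary \ref{ElCoroDeLaMatriz} — pushing the form down the module isomorphism $X\Sigma_{\Gal(F/\QQ)}\mapsto\chi'(X)$ onto $\left\langle\Sigma_H\right\rangle$, where $H=\ker\chi'=\Gal(F/K)$ — shows simultaneously that $\mathbb{e}=\{\mathbb{e}_1,\dots,\mathbb{e}_m\}$ is a normal integral basis of $\oo_K$ and that the circulant attached to $\Tr_{K/\QQ}$ on $\oo_K$ in this basis is exactly $\chi'(s)=\sum_{d\mid m}a_d\,\Sigma_{\langle d\rangle}$ as computed in Lemma \ref{coef3}.

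Next I would read off the Gram matrix entry by entry. By the defining relation between a symmetric circulant and its associated group-ring element, and using $\mathbb{e}_i=t^{i-1}\mathbb{e}_1$, one has
\[\Tr_{K/\QQ}(\mathbb{e}_i\,\mathbb{e}_j)={\rm Pr}\!\left(\chi'(s)\,t^{i-1}\,\overline{t^{j-1}}\right)={\rm Pr}\!\left(\chi'(s)\,t^{i-j}\right),\]
which is the coefficient of $t^{\,j-i}$ in $\chi'(s)$. Since the unique subgroup of $\ZZ/m\ZZ$ of order $d$ is $\langle d\rangle=\langle t^{m/d}\rangle=\{t^{0},t^{m/d},\dots,t^{(d-1)m/d}\}$, the coefficient of $t^{\,j-i}$ in $\Sigma_{\langle d\rangle}$ equals $1$ precisely when $\tfrac{m}{d}\mid (j-i)$, and $0$ otherwise; note $\tfrac{m}{d}\mid (j-i)$ if and only if $\tfrac{m}{d}\mid (i-j)$. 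Hence
\[\Tr_{K/\QQ}(\mathbb{e}_i\,\mathbb{e}_j)=\sum_{\substack{d\mid m\\ (m/d)\mid (i-j)}}a_d=\sum_{d\mid m}a_d\,(A_d)_{i,j},\]
so the Gram matrix of $\left\langle\oo_K,\Tr_{K/\QQ}\right\rangle$ in the basis $\mathbb{e}$ is $M=\sum_{d\mid m}a_d\,A_d$, as claimed. One checks as a sanity test that for $m=q^r$ this recovers the formula $\sum_{k=r-l}^{r}a_k$ of Corollary \ref{ElCoroDeLaMatriz}.

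The combinatorial identification above is routine; the step deserving care is the bookkeeping of scaling factors. By transitivity of the trace, $\Tr_{F/\QQ}$ restricts on $\oo_K$ to $[F:K]\,\Tr_{K/\QQ}$, so a priori $\chi'(s)$ might represent a rescaling of $\Tr_{K/\QQ}$ rather than $\Tr_{K/\QQ}$ itself. What saves the day — exactly as in Lemma \ref{oneprime} — is that $\Sigma_H^2=|H|\,\Sigma_H=[F:K]\,\Sigma_H$, so the factor $[F:K]$ picked up when restricting $\beta$ to $\left\langle\Sigma_H\right\rangle$ cancels precisely against the $[F:K]$ relating the two traces; this is the one place where the argument is not purely formal. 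The remaining ingredient, that $\mathbb{e}$ is an honest $\ZZ$-basis of $\oo_K$ and not merely a $\QQ$-basis, rests on the $\ZZ[\ZZ/m\ZZ]$-module isomorphism between $\left\langle\Sigma_{\Gal(F/\QQ)}\right\rangle$ and $\ZZ[\ZZ/m\ZZ]$ together with the normal integral basis of $\oo_F$ recorded in Remark \ref{LaINBDeF}.
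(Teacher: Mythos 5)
Your proposal is correct and follows essentially the same route as the paper: the same basis $\mathbb{e}_1=\Tr_{F/K}(w)$, $\mathbb{e}_{j+1}=t^j(\mathbb{e}_1)$ built from the normal integral basis of $\oo_F$ in Remark \ref{LaINBDeF}, with the Gram entries read off from the circulant $\chi'(s)=\sum_{d\mid m}a_d\Sigma_{\langle d\rangle}$ of Lemma \ref{coef3}; your entrywise condition $\tfrac{m}{d}\mid(i-j)$ is equivalent to the paper's formula $\sum_{k\mid(j-i,m)}a_{mk/(j-i,m)}$. You are somewhat more explicit than the paper about the cancellation of the factor $[F:K]$ via $\Sigma_H^2=|H|\Sigma_H$, which the paper leaves implicit by appealing to the prime-power case, but this is a difference of exposition rather than of method.
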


\begin{proof} As in Remark \ref{LaINBDeF}, for every $1 \leq i \leq n$, $F_i$ has a normal integral basis $\mathbb{e}^{i}$ of $\oo_{F_i}$. Since $F$ is the composite of the $F_i$'s and $\mathfrak{d}(F_i)$ are pairwise co-primes, then the product of such a basis form a normal integral basis $\{ b_1, b_2, \dots, b_{[F:\QQ]}\}$ of $\oo_F$.

Finally, let $t$ be a generator of  $\Gal(K/\QQ)$, $\mathbb{e}_1 := \Tr_{F/K}(b_1)$, and $\mathbb{e}_{j+1}:= t^j(\mathbb{e}_1)$. Then $\mathbb{e}:= \{\mathbb{e_1}, \dots, \mathbb{e}_m \}$ is a normal integral basis of $\oo_K$ and 

\[ \begin{array}{llc}
\Tr_{K / \QQ}(\mathbb{e}_{j} \mathbb{e}_{j}) & = \sum\limits_{d | m} a_d & \forall j  \\
& \\
\Tr_{K / \QQ}(\mathbb{e}_{i} \mathbb{e}_{j}) & =\sum\limits_{k \big|(j -i,m)} a_{\frac{mk}{(j-i, m)}} & \mbox{ if } i \neq j  .
\end{array} \]
from which the result follows. 
\end{proof}

\subsection{Even degree}

Let $K$ be a tame cyclic number field of degree $m=2^{r_0}q_1^{r_1} \dots q_{l}^{r_l}$, where $r_i \geq 1$ and $q_i$ are odd primes for every $1 \leq i \leq l$. If $p_1 , p_2, \dots, p_{n}$ are the primes ramifying in $K$  then by Lemma \ref{cycliclem} there exist number fields $F_1, F_2, \dots, F_{l}$ such that $K \subset F_1 F_2  \dots F_{l}$, $[F_i : \QQ] = e_{p_i}(K / \QQ)$, and the only prime ramifying in $F_i$ is $p_i$.\\

We define
\[h_i:= [\QQ(\eta_{p_i}) : F_i] = \frac{p_i - 1}{e_{p_i}(K / \QQ)},\]
thus, by Lemma \ref{oneprime} for every $i$ such that $h_i$ is even, the associated circulant to $F_i$ will be equivalent to
\[ s(F_i) = p_i I - h_i \Sigma_{G^i},\]
meanwhile for every $j$ such that $h_j$ is odd the associated circulant to $F_j$ should be equivalent to
\[ s(F_j) = p_j \overline{C} - h_j \Sigma_{G^i},\]
where $\displaystyle G^i := \Gal(F_i / \QQ) \simeq \ZZ/ e_{q_i}(K/\QQ)\ZZ$.\\

Therefore, if $F:= F_1 F_2 \dots F_{l}$ then the respective associated circulant $s$ to $F$ is equivalent to

\begin{eqnarray*}
  s(F) &=& s(F_1) \cdot s(F_2) \cdot ... \cdot s(F_{l}) \\
   &=& \prod_{i=1}^{l} \left(p_i Y_i - h_i \Sigma_{G^i}.\right)
\end{eqnarray*}
where,

\[ Y_i := \left\{  \begin{array}{cc} I & \mbox{ if } h_i \mbox{ is even } \\ & \\  \overline{C} & \mbox{ if } h_i \mbox{ is odd.}   \end{array} \right. \]

Finally, since $\chi': \Gal(F / \QQ) \to \Gal(K/ \QQ)$ is surjective the associated circulant to $K$ is
\begin{eqnarray*}
 \chi'(s) & = & \prod_{i=1}^{l} \chi'\left(s(F_i) \right)\\
& = & \prod_{i=1}^{l} \chi'\left(p_i Y_i - h_i \Sigma_{G^i} \right)\\
& = & \prod_{i=1}^{l} \left( p_i Y_i' - h_i \Sigma_{\langle e_{p_i} \rangle} \right).
\end{eqnarray*}

Where $\left< e_{p_i} \right>$ denotes the unique subgroup of $\ZZ / m\ZZ$ of order $e_{p_i}(K / \QQ)$,

\[ Y_i' := \left\{  \begin{array}{cc} I & \mbox{ if } h_i \mbox{ is even } \\ & \\  \sigma & \mbox{ if } h_i \mbox{ is odd}   \end{array} \right.  \]
and $\sigma$ represent the only element in $\Gal(K / \QQ)$ of order 2; For example, if $K$ is totally complex then $\sigma$ will be the complex conjugation.

\begin{theorem}{\label{even}}

Let $K, K'$ be two tame cyclic number fields with the same even degree. Then,
\[\left<\oo_K, \Tr_{K/ \QQ}  \right> \simeq \left<\oo_{K'}, \Tr_{K'/ \QQ} \right> \ \mbox{if and only if} \ \mathfrak{d}(K) = \mathfrak{d}(K').\]
\end{theorem}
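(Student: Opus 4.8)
The plan is to reduce Theorem \ref{even} to the building blocks already assembled in this section, exactly mirroring the structure of the odd-degree proof in Theorem \ref{odd} and the prime-power case in Theorem \ref{2tothen}. As always, only the nontrivial implication requires work: if $\mathfrak{d}(K) = \mathfrak{d}(K')$, I must produce a $\ZZ$-linear isometry between the two integral trace forms. First I would invoke the discriminant-exponent formula recalled at the end of the proof of Theorem \ref{main1}: for a tame Galois field $E$ of degree $m$, the exponent of a prime $p$ in $\mathfrak{d}(E)$ equals $m\bigl(1 - \tfrac{1}{e_p(E/\QQ)}\bigr)$. Since this quantity determines $e_p$ and conversely, the equality $\mathfrak{d}(K) = \mathfrak{d}(K')$ forces $e_p(K/\QQ) = e_p(K'/\QQ)$ for every prime $p$, and in particular $K$ and $K'$ ramify at the same set of primes $p_1,\dots,p_n$ with identical ramification indices.

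Next I would compare the associated circulants. The computation preceding this theorem shows that the circulant attached to $K$ is
\[
\chi'(s) = \prod_{i=1}^{l} \left( p_i Y_i' - h_i \Sigma_{\langle e_{p_i} \rangle} \right) \in \ZZ[\ZZ/m\ZZ],
\]
where $h_i = \tfrac{p_i - 1}{e_{p_i}(K/\QQ)}$, the subgroup $\langle e_{p_i}\rangle$ is the unique subgroup of $\ZZ/m\ZZ$ of the given order, and $Y_i'$ is $I$ or $\sigma$ according to the parity of $h_i$ (with $\sigma$ the unique order-two element). Every ingredient of this product is pinned down by the ramification data: the primes $p_i$, the indices $e_{p_i}$, and hence the integers $h_i$ together with their parities. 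Since all of these agree for $K$ and $K'$ by the first step, the two circulants coincide as elements of $\ZZ[\ZZ/m\ZZ]$, so in particular $s(K)$ and $s(K')$ are congruent. By the $1$-to-$1$ correspondence between congruence classes of circulants and isometry classes of the associated symmetric forms established in \S\ref{Notation}, this yields the desired isometry $\left<\oo_K, \Tr_{K/\QQ}\right> \simeq \left<\oo_{K'}, \Tr_{K'/\QQ}\right>$.

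The one genuinely even-degree subtlety, and the step I would treat most carefully, is the behaviour of the factor $Y_i'$. In the odd case $Y_i' = I$ always, so the circulant depends only on the $p_i$ and $e_{p_i}$; in the even case the parity of $h_i$ selects between $I$ and $\sigma$, and this parity governs whether each $F_i$ is totally real or totally complex. I must check that this parity is itself determined by the common ramification data: since $h_i = \tfrac{p_i-1}{e_{p_i}}$ is an explicit function of $p_i$ and $e_{p_i}$, and both quantities are shared by $K$ and $K'$, the parities match and hence $Y_i'(K) = Y_i'(K')$ for every $i$. Once this is verified the factor-by-factor equality of the circulants is immediate, and the remark on signatures in the introduction is exactly the content being exploited here—the discriminant silently encodes the signature through these parities. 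I would therefore present the proof compactly, as in Theorem \ref{2tothen}: establish equality of ramification indices, note that $h_i$ and its parity are thereby fixed, conclude that the circulant products agree termwise, and read off the isometry from the congruence correspondence.
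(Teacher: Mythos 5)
Your proposal is correct and follows essentially the same route as the paper: deduce $e_p(K/\QQ)=e_p(K'/\QQ)$ from the tame discriminant-exponent formula, conclude that the $h_i$ (and hence the $Y_i'$) agree, and read off equality of the circulants $\prod_i\left(p_iY_i'-h_i\Sigma_{\langle e_{p_i}\rangle}\right)$. Your explicit check that the parity of $h_i$, and therefore the factor $Y_i'$, is pinned down by the shared ramification data is the only point the paper leaves implicit, and it is a worthwhile addition.
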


\begin{proof}
As usual we only show the non trivial implication. As we have seen before the hypotheses imply that  $e_p(K / \QQ) = e_p(K' / \QQ)$ for all prime $p$. Then,
\[ h_i(K) = \frac{q_i - 1}{e_{p_i}(K / \QQ)} = \frac{q_i - 1}{e_{p_i}(K' / \QQ)} = h_i(K')\]
 for all $i$. Therefore, the respective associated circulants

\[s(K) = \prod_{i=1}^{l} \left( p_i Y_i' - h_i \Sigma_{\langle e_{p_i} \rangle} \right)  = s(K') \]
are equal. Thus, the integral quadratic modules $\left<\oo_{K}, \Tr_{K / \QQ} \right>$, $\left<\oo_{K'}, \Tr_{K' / \QQ} \right>$ are isometric.
 \end{proof}

\section{Acknowledgments}

We would like to thank the referee for the careful reading of the paper, and for their helpful comments.

\noindent
{\footnotesize Wilmar Bola\~nos, Department of Mathematics, Universidad de los
Andes,  Bogot\'a, Colombia\\ (\texttt{wr.bolanos915@uniandes.edu.co})}

\noindent
{\footnotesize Guillermo Mantilla-Soler, Department of Mathematics, Universidad Konrad Lorenz,
Bogot\'a, Colombia. Department of Mathematics and Systems Analysis, Aalto University, Espoo, Finland. ({\tt gmantelia@gmail.com})}

\end{document}